\documentclass[reqno,11pt]{amsart}

\newtheorem{theorem}{Theorem}[section]
\newtheorem{lemma}[theorem]{Lemma}
\newtheorem{proposition}[theorem]{Proposition}

\theoremstyle{definition}
\newtheorem{definition}[theorem]{Definition}
\newtheorem{remark}[theorem]{Remark}

\numberwithin{equation}{section}
\usepackage{color}
\definecolor{MyLinkColor}{rgb}{0,0,0.4}
\usepackage[colorlinks,linkcolor=MyLinkColor,citecolor=MyLinkColor, urlcolor=black]{hyperref}
\usepackage{mathtools}
\usepackage{empheq}
\usepackage{mathrsfs}

\def\R{{\mathbb R}}
\def\0{\Omega}
\def\dx{\,{\rm d}x}
\def\dy{\,{\rm d}y}
\def\ds{\,{\rm d}s}

\def\dphi{\,{\rm d}\varphi}

\def\dtheta{\,{\rm d}\theta}
\def\dsi{\,{\rm d}\sigma}

\def\C{\mathcal{C}}

\def\T{\mathcal{T}}

\def\p{\partial}

\def\E{\mathcal{E}}
\def\ee{{\bf e}}
\def\n{{\bf n}}

\def\S{\mathbb{S}}
\def\x{{\bf x}}
\def\U{{\bf U}}

\DeclareMathOperator{\e}{e}

\makeatletter
\@namedef{subjclassname@2020}{\textup{2020} Mathematics Subject Classification}
\makeatother

\setlength{\textwidth}{\paperwidth}
\addtolength{\textwidth}{-7cm}
\calclayout

\begin{document}
\title[Existence, uniqueness and stability for a model of the ACC]
{Global-in-time existence, uniqueness and stability of solutions to a model of the Antarctic Circumpolar Current}

\author[L. Roberti]{Luigi Roberti}
\address{Faculty of Mathematics, University of Vienna, Oskar--Morgenstern--Platz 1, 1090 Vienna, Austria}
\email{\href{mailto:luigi.roberti@univie.ac.at}{luigi.roberti@univie.ac.at}}

\author[E. Stefanescu]{Eduard Stefanescu}
\address{Institut f\"ur Analysis und Zahlentheorie, TU Graz, Steyrergasse 30, 8010 Graz, Austria}
\email{\href{mailto:eduard.stefanescu@tugraz.at}{eduard.stefanescu@tugraz.at}}

\subjclass[2020]{35B32; 35B35; 35Q31;  35R35; 76U60.}
\keywords{Antarctic Circumpolar Current; Stability; Local bifurcation.}

\begin{abstract}
We consider a model for the Antarctic Circumpolar Current in rotating spherical coordinates. After establishing global-in-time existence and uniqueness of classical solutions, we turn our attention to the issue of stability of a class of steady zonal solutions (i.e., time-independent solutions that vary only with latitude). By identifying suitable conserved quantities and combining them to construct a Lyapunov function, we prove a stability result.
\end{abstract}

\maketitle

\section{Introduction}

The Antarctic Circumpolar Current (ACC) is one of the Earth's most important currents. It transports approximately 140 million cubic meters of water per second over a distance of roughly 24,000\,km. Furthermore, it is the only current which flows all around the globe, unhindered by land masses, and connects the Atlantic, Pacific and Indian basins, thereby playing a crucial role in the global climate dynamics. It is located roughly between 40$^\circ$S and $60^\circ$S and has a typical width of about 2000\,km, its narrowest point (approximately 700\,km) being Drake Passage at the southern tip of South America; see, for instance, \cite{Gil82,Val17}.

The ACC has very peculiar features. Firstly, its meridional (North-South) velocity component is observed to be significantly smaller than the azimuthal (East-West) component, while the vertical velocity is altogether negligible. Moreover, unlike most other currents, it is not a single flow but instead comprises several localised, vertically coherent high-speed jets (typically 40-50 km wide, with speeds in excess of 1 m s$^{-1}$), reaching to the seafloor, separated by regions of low-speed flow, with average speeds below 20 cm s$^{-1}$ \cite{ConJoh16,KliNow01}. In short, the ACC is overall a deep-reaching, jet-structured flow that is mostly propagating zonally to the East around Antarctica, except where it is occasionally deviated by the underlying bottom topography, such as the Scotia Ridge (between 55$^\circ$W and 40$^\circ$W, downstream of South America), the Southwest Indian Ridge (between 20$^\circ$E and 30$^\circ$E) and the Pacific-Antarctic Ridge (between 145$^\circ$W and 120$^\circ$W)---see \cite{ConAbr23}. It is also worth noting that density stratification is much weaker in the Southern Ocean compared to equatorial region; this contributes to the vertical coherence of the ACC flow \cite{TomGod03}.

It is customary in oceanography to avoid using spherical coordinates by means of so-called tangent plane approximations. Most prominent amongst these are the $f$-plane approximation, which is derived from the Euler equations (or the Navier--Stokes equations, if viscosity effects are being considered) in the limit $L/R \to 0$, where $L$ is a typical length scale and $R$ is the Earth's radius, and the $\beta$-plane approximation, which seeks to improve the $f$-plane approximation by accounting for the meridional variation of the angular velocity with latitude, so long as the motion in the meridional direction is of comparatively small extension (see, for instance, \cite{Val17}). However, tangent plane approximations heavily rely on the flow being only of ``local" size; furthermore, it has beeen argued that the $\beta$-plane, albeit an improvement over the $f$-plane, doues not yield a consistent approximation in non-equatorial regions (cf. the discussion in \cite{Del11}). It is therefore of the utmost importance to work in spherical coordinates when dealing with flows of global extension such as the ACC. Let us remark that the Earth is in fact an oblate spheroid, but the small difference between the equatorial (6378\,km) and polar (6356\,km) radii does not significantly affect the dynamics (see, e.g., \cite{ConJoh21,Wun15}), so that we may approximate the Earth's surface by a perfect sphere.

The starting point are the Euler equations in spherical coordinates, in a frame of reference that is rotating with the Earth. The full equations being qualitatively intractable, it is necessary to simplify them through suitable approximations. The model we are going to consider is derived by means of the \emph{thin-shell approximation}, which exploits the ocean's small aspect ratio (i.e., the ratio between the ocean's depth and the Earth's radius) to perform an asymptotic expansion in the full model; the leading order equations are then investigated. Although this somewhat simplifies the problem, this approximation makes no further assumptions beyond the small aspect ratio, and thus crucially retains the sphericity of the Earth's surface, unlike the aforementioned tangent plane approximations. The problem at hand is in particular fully nonlinear. This method was originally devised as a means to describing ocean gyres \cite{ConJoh17,ConKri19}, but has been since applied to other oceanic flows \cite{ConJoh23,Haz19,HazMar18}, including the Antarctic Circumpolar Current, as well as atmospheric flows (via a slightly different scaling---see \cite{ConGer22,ConJoh21,ConJoh22}). Nevertheless, as a consequence of the thin-shell approximation, the motion is essentially two-dimensional, as the vertical motion vanishes at leading order. This means that this model is only appropriate to describe the near surface flow; still, in spite of this, it is an useful tool to gain new insights and as such has been studied extensively in the past few years. In particular, several authors showed how the stereographic projection may be a useful tool by which to rewrite the problem in a way amenable to analysis---see, for instance, \cite{HazMar18,Mar19}, where this approach is formalised, as well as \cite{ChuMar21,Chu23}, where the reformulation is utilised to construct zonal solutions with certain properties.

It seems however that, to the best of the authors' knowledge, all results concerning this model have up to now only dealt with steady (i.e., time-independent) flows, with nearly exclusive focus on purely zonal solutions (with the only exception of the study \cite{Haz19}, which discusses existence, uniqueness and regularity of solutions to an elliptic partial differential equation which arises by applying the Mercator projection to equation \eqref{stationary_radial} below for particular choices of the vorticity). The present study aims at extending the analysis to the time dependent case, establishing well-posedness in a classical sense as well as long-time stability of certain steady zonal flows. The stereographic projection will be useful when tackling the issue of existence and uniqueness of classical solutions, but the stability properties are more conveniently dealt with in the original spherical coordinates. For recent developments concerning models that rely on approximations other than thin-shell, the reader is referred to \cite{ConAbr23,ConJoh16}, where the authors construct explicit steady azimuthal depth-dependent solutions.

It is also worth pointing out that the issues of stability of stationary solutions and existence of non-zonal stationary solutions for the Euler equations on a rotating sphere have been addressed in the papers \cite{ConGer22,Tay16}; however, these studies deal with flows on the whole sphere (with applications to stratospheric flows on planets, in the case of \cite{ConGer22}), whereas our work focuses on solutions defined on a proper subset of the sphere, which requires a drastically different approach due to the presence of boundary conditions.

\section{Preliminaries}

In the following, we will parametrise the unit sphere $\S^2$ (with the North Pole $N$ and the South Pole $S$ excised) via the spherical coordinates
 \begin{equation} \label{spherical_coordinates}
 [0,2\pi)\times\left(-\frac{\pi}{2},\frac{\pi}{2}\right) \to \S^2, \quad (\varphi,\theta) \mapsto (\cos\varphi\cos\theta,\,\sin\varphi\cos\theta,\,\sin\theta);
 \end{equation}
in ``geophysical" terms, $\varphi$ denotes the longitude, whereas $\theta$ is the latitude. These coordinates provide us with a smooth field of tangent vectors
 \[\mathbf{e}_{\varphi}=\frac{1}{\cos \theta} \partial_{\varphi}, \quad \mathbf{e}_\theta=\partial_\theta,\]
which are a basis of the tangent space $T_X\S^2$ at $X\in\S^2\setminus\{N,S\}$. In these coordinates the Riemannian volume element and the classical differential operators (gradient, divergence and Laplace--Beltrami operator) are given by 
 \begin{align*}
 &\dsi=\cos \theta\dphi\dtheta,\\
 & \operatorname{grad}\psi=\frac{\p_{\varphi} \psi}{\cos \theta} \,\mathbf{e}_{\varphi}+\partial_\theta \psi\, \mathbf{e}_\theta, \\
 & \operatorname{div}\left(F^{\varphi}\mathbf{e}_{\varphi}+F^\theta\mathbf{e}_\theta\right)=\frac{1}{\cos \theta}[\p_{\varphi} F^{\varphi}+\p_\theta(F^\theta\cos \theta)], \\
 & \Delta \psi = \p_\theta^2 \psi-\tan \theta \p_\theta \psi+\frac{1}{(\cos \theta)^2} \p_{\varphi}^2 \psi = \frac{1}{\cos\theta}\p_\theta(\cos\theta\p_\theta\psi) + \frac{1}{(\cos \theta)^2}\p_{\varphi}^2\psi.
 \end{align*}
 
Henceforth derivatives will be denoted by subscripts for brevity. As mentioned in the Introduction, the equations that we are going to consider in this paper have been derived as an asymptotic model from the Euler equations in spherical coordinates, in a frame of reference that is rotating with the Earth.  Since the derivation of the model can be found at several places in the literature (see, for instance, \cite{ConJoh17,ConKri19,Mar19}), we will omit it here. Instead, we will merely write down the equations for the leading-order dynamics: these are, essentially, the non-dimensional incompressible Euler equations on the surface of a rotating sphere, written in the spherical coordinates \eqref{spherical_coordinates}, given by
 \begin{subequations} \label{Euler_spherical}
 \begin{align}
 & u_t + \frac{uu_\varphi}{\cos\theta} + vu_\theta - uv\tan\theta - 2\omega v\sin\theta = -\frac{p_\varphi}{\cos\theta} \label{Euler_spherical_1} \\
 & v_t + \frac{uv_\varphi}{\cos\theta} + vv_\theta + u^2\tan\theta + 2\omega u\sin\theta = -p_\theta \\
 & u_\varphi + (v\cos\theta)_\theta = 0, \label{incompressible}
 \end{align}
 \end{subequations}
where $\omega>0$ is the (non-dimensional) rotation rate, $(u, v)$ are the velocity components along the unit vectors $(\mathbf{e}_\varphi, \mathbf{e}_{\theta})$, and $p$ is the pressure. For later reference, let us remark that $\omega = \Omega^{\prime}R^{\prime}/U^{\prime}$, where $\Omega^{\prime} \approx 7.29 \times 10^{-5}\, \mathrm{rad}\, \mathrm{s}^{-1}$ is the rate of rotation of the Earth, $R^{\prime}\approx6378$\,km is the mean radius of the Earth, and $U' = 10^{-1}\,{\rm m\, s}^{-1}$ is a typical velocity scale that is relevant when performing the scaling that eventually leads to \eqref{Euler_spherical}. With this choice of $U'$, we have $\omega \approx 4650$. Smooth solutions to \eqref{Euler_spherical} must necessarily be $2\pi$-periodic in $\varphi$, since we are interested in an annular region $\C$ on the sphere situated in the Southern hemisphere and delimited by fixed latitudes $-\frac{\pi}{2}<\theta_1<\theta_2<0$:
 \[\C = \left\{(\varphi,\theta) \in [0,2\pi)\times\left(\theta_1,\theta_2\right)\right\}.\]
As boundary conditions, we take
 \begin{equation} \label{BC_spherical}
 v(\varphi,\theta_1,t) = v(\varphi,\theta_2,t) = 0 \quad \text{for all } \varphi\in [0,2\pi) \text{ and all } t\geq 0.
 \end{equation}
The existence of classical solutions to this problem is established in § \ref{classical_solutions} and Appendix \ref{appendix_classical_solutions} below.

We will denote by $\0$ the vorticity associated to $(u,v)$, namely,
 \[\0 = \frac{1}{\cos\theta}[v_\varphi - (u\cos\theta)_\theta].\]
Since the domain $\C$ is doubly connected and the velocity field $u\,\mathbf{e}_\varphi + v\,\mathbf{e}_\theta$ is tangential to the boundary $\p\C = \{\theta=\theta_1\}\cup\{\theta=\theta_2\}$ of $\C$, equation \eqref{incompressible} implies at each time $t$ the existence of a stream function $\psi(\varphi,\theta,t)$ with the property that
 \begin{equation} \label{stream_function}
 u = -\psi_\theta \quad \text{and} \quad v = \frac{1}{\cos\theta}\psi_\varphi.
 \end{equation}
At each fixed $t$, the stream function $\psi$ is constant on each of the two circles that $\p\C$ is comprised of, the constant value on $\{\theta = \theta_1\}$ being different than the constant value on $\{\theta = \theta_2\}$ (cf. \cite{HamNad23}). Since, at each time $t$, the stream function is defined up to an additive constant (depending on $t$), we may determine $\psi$ uniquely by prescribing, for instance, a fixed value $\psi_2\in\R$ to be attained by $\psi$ on $\{\theta = \theta_2\}$, so that
 \begin{equation} \label{BC_polar}
 \psi(\varphi,\theta_1,t) = \psi_1(t) \quad \text{and } \quad \psi(\varphi,\theta_2,t) = \psi_2 \quad \text{for all } \varphi\in[0,2\pi) \text{ and all } t \geq 0,
 \end{equation}
with $\psi_1(t) \neq \psi_2$ for all $t$. In terms of $\psi$, the equations of motion \eqref{Euler_spherical} can be reformulated (by eliminating the pressure $p$; cf. \cite{ConGer22}) as
 \begin{equation} \label{main_time_dependent}
 (\Delta\psi)_t + \frac{1}{\cos\theta}\big[\psi_\varphi\p_\theta - \psi_\theta\p_\varphi\big](\Delta\psi + 2\omega\sin\theta) = 0 \quad \text{in } \C,
 \end{equation}
with boundary conditions \eqref{BC_polar}. Note that, by the rank theorem \cite{Abr88}, stationary (that is, time-independent) solutions to \eqref{main_time_dependent} are given by the solutions to the equation
 \begin{equation} \label{stationary_radial}
 \Delta\psi + 2\omega\sin\theta = F(\psi) \quad \text{in } \C,
 \end{equation}
as long as the gradient of $\psi$ does not vanish anywhere in the annulus, where $F$ is an arbitrary $C^1$ function on $\R$. Physically, the meaning of this equation is that the total vorticity $\0$ (which is readily checked to be equal to $\Delta\psi$) is given by the sum of two components: that due to the rotation of the Earth (namely, $-2\omega\sin\theta$), and that due to the motion of the ocean ($F(\psi)$, the so-called \emph{oceanic vorticity}). In later sections, we will consider affine oceanic vorticities of the form
 \begin{equation} \label{F(psi)}
 F(\psi) = -\lambda\psi + \Upsilon
 \end{equation}
for real parameters $\lambda,\Upsilon\in\R$. This choice is not merely for mathematical convenience, but is also physically motivated. In fact, the main sources of oceanic vorticity are wind stress and tidal currents, which are modelled by constant non-zero vorticity, whose sign determines whether the underlying shear currents correspond to ebb or flow tide; see, e.g., the discussion in \cite{TelPer88}. Since it can be of interest to investigate perturbations of constant vorticities as well, the first, most natural generalisation is to consider linear vorticities (which would be the next order in the Taylor expansion of a general vorticity function).

\section{Notation and main results}

It is convenient to fix some notation that will be used throughout the paper, before we formulate the results, which are going to be proven in the sequel. Let $\C$ be the subset of $\S^2$ defined above, and let $\overline{\C}$ denote the set
 \[\overline{\C} = \left\{(\varphi,\theta) \in [0,2\pi)\times\left[\theta_1,\theta_2\right]\right\}.\]
Let $f : \C\to\R$ and $\vartheta\in (0,1)$. We say that $f\in C^{0+\vartheta}(\overline{\C})$ if there exists $C>0$ such that
 \[|f(\tilde{\varphi}_1,\tilde{\theta}_1) - f(\tilde{\varphi}_2,\tilde{\theta}_2)| \leq C\,d((\tilde{\varphi}_1,\tilde{\theta}_1),(\tilde{\varphi}_2,\tilde{\theta}_2))^\vartheta \quad \text{for all } (\tilde{\varphi}_1,\tilde{\theta}_1),(\tilde{\varphi}_2,\tilde{\theta}_2) \in \overline{\C},\]
where $d(\cdot,\cdot)$ denotes the geodesic length on the sphere (i.e., $d((\tilde{\varphi}_1,\tilde{\theta}_1),(\tilde{\varphi}_2,\tilde{\theta}_2))$ is the length of the shortest arc on the sphere connecting the points with spherical coordinates $(\tilde{\varphi}_1,\tilde{\theta}_1),(\tilde{\varphi}_2,\tilde{\theta}_2)$). Given $k\in\mathbb{N}$, we say that $f\in C^{k+\vartheta}(\overline{\C})$ if all the derivatives of $f$ with respect of $(\varphi,\theta)$ up to order $k$ are in $C^{0+\vartheta}(\overline{\C})$. A vector-valued function ${\bf f} : \C\to\bigcup_{X\in\C}T_X\S^2$, ${\bf f}(\varphi,\theta) = f(\varphi,\theta)\ee_\varphi + g(\varphi,\theta)\ee_\theta$ is said to be in $C^{k+\vartheta}(\overline{\C})$ if both $f$ and $g$ are.

Let us now summarise the main results of this paper. First, we will deal with the issue of existence and uniqueness of classical solutions to our problem. 

\begin{theorem}[Global classical solutions] \label{Main_Theorem_1}
Suppose that ${\bf u}_0 = u_0\,\ee_\varphi + v_0\,\ee_\theta \in C^{1+\vartheta}(\overline{\C})$ for some $\vartheta\in(0,1)$, $\operatorname{div}{\bf u}_0 = 0$ in $\C$, and $v_0 = 0$ on $\p\C = \{\theta = \theta_1\}\cup\{\theta = \theta_2\}$. Furthermore, let $T > 0$. Then there exists a solution $(u\,\ee_\varphi + v\,\ee_\theta,p)$ to \eqref{Euler_spherical}--\eqref{BC_spherical}, with $(u\,\ee_\varphi + v\,\ee_\theta)|_{t=0} = {\bf u}_0$, such that $u$, $v$, $p$, and all their derivatives as appear in \eqref{Euler_spherical} belong to $C(\overline{\C}\times[0,T])$. Such a solution is unique up to an arbitrary function of $t$ which may be added to $p$.
\end{theorem}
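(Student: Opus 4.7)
The plan is to follow the classical Wolibner--Kato strategy for the two-dimensional incompressible Euler equations, suitably adapted to the spherical annulus $\C$. Setting the \emph{absolute vorticity} $q:=\Delta\psi+2\omega\sin\theta = \0 + 2\omega\sin\theta$, equation \eqref{main_time_dependent} rewrites as the pure transport equation
\[
q_t + \frac{1}{\cos\theta}\bigl[\psi_\varphi\,\p_\theta - \psi_\theta\,\p_\varphi\bigr]q = 0 \quad \text{in } \C,
\]
so that $q$ is advected by the divergence-free, tangential field $u\,\ee_\varphi + v\,\ee_\theta$; in particular $\|q(\cdot,t)\|_{L^\infty(\C)} = \|q_0\|_{L^\infty(\C)}$ for all $t$, and more generally $q(\cdot,t)$ is an equimeasurable rearrangement of the initial datum $q_0$. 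The stream function is then recovered from the elliptic problem $\Delta\psi = q - 2\omega\sin\theta$ in $\C$ with boundary conditions \eqref{BC_polar}, the scalar $\psi_1(t)$ being pinned down by Kelvin's circulation theorem, which asserts that $\oint_{\theta=\theta_1}u\dell$ is a constant of the motion (the inner circle is a material curve, since $v\equiv 0$ there).

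To construct a solution, I would first pass to a flat setting via the stereographic projection from the North Pole, which maps $\C$ diffeomorphically onto a planar annulus $\mathcal{A}\subset\R^2$ and transforms $\Delta$ into $\rho^{-2}\Delta_{\R^2}$ for a smooth, uniformly positive conformal factor $\rho$, so that the standard planar Schauder and Calder\'on--Zygmund machinery applies. I would then set up a Picard iteration: given a tangential, divergence-free ${\bf u}^{(n)}\in C([0,T];C^{1+\vartheta}(\overline{\mathcal{A}}))$, transport $q_0$ along the flow of ${\bf u}^{(n)}$ to produce $q^{(n+1)}$, solve the Dirichlet problem for $\psi^{(n+1)}$ with $\psi_1^{(n+1)}(t)$ enforced by the circulation constraint, and recover ${\bf u}^{(n+1)}$ via \eqref{stream_function}. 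The pressure, absent from \eqref{main_time_dependent}, is finally reconstructed by taking the divergence of the momentum equations and using \eqref{incompressible}, which yields a Neumann problem for $p$ determining it uniquely up to an additive function of $t$; this accounts for the indeterminacy in the uniqueness statement.

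The heart of the argument, and the step I expect to be the main technical hurdle, is the uniform a priori bound on $\|q(\cdot,t)\|_{C^\vartheta}$ on $[0,T]$. Schauder estimates for the Dirichlet Laplacian on $\mathcal{A}$ yield
\[
\|\psi(\cdot,t)\|_{C^{2+\vartheta}} \leq C\bigl(\|q(\cdot,t)\|_{C^\vartheta}+|\psi_1(t)|+|\psi_2|+1\bigr),
\]
hence control of $\|{\bf u}(\cdot,t)\|_{C^{1+\vartheta}}$ by $\|q(\cdot,t)\|_{C^\vartheta}$ plus lower-order data. Coupling this with the log-Lipschitz bound
\[
\|{\bf u}(\cdot,t)\|_{\mathrm{Lip}} \leq C\,\|q(\cdot,t)\|_{L^\infty}\log\!\Big(\mathrm{e}+\frac{\|q(\cdot,t)\|_{C^\vartheta}}{\|q(\cdot,t)\|_{L^\infty}}\Big),
\]
obtained from the Calder\'on--Zygmund inequality applied to the Biot--Savart kernel built from the Dirichlet Green's function of $\mathcal{A}$, and with the exponential bound $\|\nabla\Phi_t\|_{L^\infty}\leq \exp\bigl(\int_0^t\|{\bf u}(\cdot,s)\|_{\mathrm{Lip}}\ds\bigr)$ for the Lagrangian flow $\Phi_t$ of ${\bf u}$, one closes a Gr\"onwall-type inequality and obtains a (double-exponential) bound for $\|q(\cdot,t)\|_{C^\vartheta}$ on $[0,T]$, independent of the iteration index. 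Handling the time-dependent boundary value $\psi_1(t)$, which depends on ${\bf u}$ through the circulation constraint, requires a careful analysis of the Green's function of $\mathcal{A}$.

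Once the uniform $C^{1+\vartheta}$ bound is secured, convergence of the iterates in $C([0,T];C^{1+\vartheta'}(\overline{\C}))$ for $\vartheta'\in(0,\vartheta)$ follows from a contraction estimate on the difference of consecutive iterates, and full $C^{1+\vartheta}$ regularity of the limit is obtained by lower semicontinuity. Uniqueness (up to an additive function of $t$ added to the pressure) is then a Yudovich-type argument: the difference of two classical solutions sharing the same initial data satisfies a linear transport equation whose coefficient obeys the log-Lipschitz estimate, and Osgood's criterion forces this difference to vanish identically on $[0,T]$.
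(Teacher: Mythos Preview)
Your proposal is correct and shares the paper's high-level strategy---stereographic projection to a planar annulus, vorticity transport, Biot--Savart reconstruction---but the technical implementation differs in several places. The paper follows Kato rather than Wolibner: instead of a Picard iteration with propagation of the $C^\vartheta$ norm via the log-Lipschitz/double-exponential mechanism you describe, it sets up a map $\varphi\mapsto\xi$ (candidate vorticity $\mapsto$ transported vorticity) on a compact convex subset of $C(\overline{Q}_T)$ and applies Schauder's fixed-point theorem. The requisite H\"older control is obtained by allowing the exponent itself to degrade: the quasi-Lipschitz estimate on the flow map yields $\xi\in C^{0+\delta,0+\delta}$ for some $\delta$ depending only on $\|\varphi\|_\infty$, which is bounded by the initial data alone---enough for compactness, and sidestepping the double-exponential bookkeeping. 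The doubly-connected topology is handled not by imposing your circulation constraint on $\psi_1(t)$ directly, but by writing $\U=\nabla^\perp G\varphi+\lambda(t)\U^*$ with $\U^*$ the normalised harmonic field on the annulus and determining $\lambda(t)$ from a quadratic ODE obtained by testing the momentum equation against $\U^*$; global existence of $\lambda$ comes from an energy-type Gronwall bound. This is equivalent to your approach but packages the harmonic component more explicitly. Finally, the paper's uniqueness argument is a plain $L^2$ energy estimate on the velocity difference closed by ordinary Gronwall---your Yudovich--Osgood route works but is more than needed, since classical solutions have Lipschitz velocity. What your approach buys is preservation of the initial H\"older exponent (at the cost of norm growth); what the paper's buys is a softer compactness argument with fewer quantitative estimates to track.
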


The strategy, carried out in Section \ref{classical_solutions}, is to reformulate the problem \eqref{Euler_spherical}--\eqref{BC_spherical} more conveniently in Cartesian coordinates by means of the stereographic projection, which allows us to apply classical arguments. The Cartesian version of Theorem \ref{Main_Theorem_1} is formulated in Theorem \ref{global_well-posedness} below. The actual proof, which is standard and follows closely the classical paper \cite{Kat67}, is then postponed to Appendix \ref{appendix_classical_solutions}.

The central result of this paper concerns the stability of certain zonal steady flows, denoted by $\Psi_{\lambda,\Upsilon}$, which solve \eqref{BC_polar}--\eqref{stationary_radial} with $F$ of the form \eqref{F(psi)}. These flows are constructed in Section \ref{radial_Sturm-Liouville} using Sturm--Liouville theory. The stability result, proved in Section \ref{stability} by exploiting some conservation laws to construct a Lyapunov function, reads as follows.

\begin{theorem}[Stability of zonal flows] \label{Main_Theorem_2}
Given $\lambda \leq 0$ and $\Upsilon\in\R$, let $\Psi_{\lambda,\Upsilon}$ be the solution of \eqref{radial_problem} below (for some real constants $\psi_2\neq \psi_1$), whose existence and uniqueness is shown in Proposition \ref{one-parameter_family}; let $\mathbf{u}^*_{\lambda,\Upsilon} = u^*_{\lambda,\Upsilon}\mathbf{e}_\varphi$ denote the corresponding steady zonal solution to \eqref{Euler_spherical} and \eqref{BC_spherical} (via \eqref{stream_function}), with associated vorticity $\0^*_{\lambda,\Upsilon}$. Suppose that $\mathbf{u}(t) = u(t)\mathbf{e}_\varphi + v(t)\mathbf{e}_\theta$ is a (time-dependent) solution to \eqref{Euler_spherical} and \eqref{BC_spherical} with initial data $\mathbf{u}|_{t=0} = \mathbf{u}_0 = u_0\mathbf{e}_\varphi + v_0\mathbf{e}_\theta$, and let $\0(t)$ denote the associated vorticity at time $t$, with initial value $\0|_{t=0} = \0_0$. Then
 \begin{equation} \label{stability_conclusion}
 -\lambda\|\mathbf{u}(t) - \mathbf{u}^*_{\lambda,\Upsilon}\|^2_{L^2(\C)} + \|\0(t)-\0^*_{\lambda,\Upsilon}\|^2_{L^2(\C)} = -\lambda\|\mathbf{u}_0-\mathbf{u}^*_{\lambda,\Upsilon}\|^2_{L^2(\C)} + \|\0_0-\0^*_{\lambda,\Upsilon}\|^2_{L^2(\C)}
 \end{equation}
for all $t\geq 0$.
\end{theorem}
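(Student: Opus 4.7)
The plan is to build an Arnold-type Lyapunov functional that combines kinetic energy with a Casimir arising from conservation of absolute vorticity, choosing the Casimir so that it matches the steady state to first order. Because the Casimir will be quadratic in $Q\coloneqq\0+2\omega\sin\theta$, its Taylor expansion around the steady state terminates, and the expected \emph{inequality} of Arnold's method will in fact be a genuine \emph{identity}.

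First I would collect the three conservation laws driving the argument. Rewriting the nonlinear term of \eqref{main_time_dependent} in advective form shows that $Q$ satisfies $Q_t+\mathbf{u}\cdot\nabla Q=0$, so that $\int_\C\Phi(Q)\,\dsi$ is conserved for every $\Phi\in C^1(\R)$ (the boundary flux vanishes since $v=0$ on $\p\C$). The kinetic energy $E[\mathbf{u}]=\tfrac12\int_\C|\mathbf{u}|^2\,\dsi$ is likewise conserved, because the Coriolis term is orthogonal to $\mathbf{u}$ and the pressure and convective contributions produce only vanishing boundary fluxes. Finally, evaluating \eqref{Euler_spherical_1} on $\{\theta=\theta_j\}$ (where $v=0$) and integrating in $\varphi\in[0,2\pi)$ yields $\tfrac{d}{dt}\Gamma_j=0$ for each circulation $\Gamma_j[\mathbf{u}]=\cos\theta_j\int_0^{2\pi}u(\varphi,\theta_j,t)\,\dphi$, which is the version of Kelvin's theorem we will use. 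The case $\lambda=0$ is then immediate, since $Q^*=\Upsilon$ is constant and so $\|\0(t)-\0^*_{0,\Upsilon}\|^2_{L^2(\C)}=\int_\C(Q-\Upsilon)^2\,\dsi$ is itself a Casimir.

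For $\lambda<0$, guided by the steady-state relation $Q^*=-\lambda\psi^*+\Upsilon$, I would choose
\begin{equation*}
 \Phi(z)=\frac{1}{\lambda}\Bigl(\Upsilon z-\tfrac12 z^2\Bigr),\qquad\text{so that }\Phi'(Q^*)=\psi^*,
\end{equation*}
and define $H=E+\int_\C\Phi(Q)\,\dsi$, which is conserved in time by the previous step. Because $\Phi$ is quadratic, the Casimir difference satisfies, \emph{exactly},
\begin{equation*}
 \int_\C\bigl[\Phi(Q)-\Phi(Q^*)\bigr]\,\dsi=\int_\C\psi^*(\0-\0^*)\,\dsi-\frac{1}{2\lambda}\|\0-\0^*\|^2_{L^2(\C)},
\end{equation*}
while expanding $|\mathbf{u}|^2=|\mathbf{u}^*|^2+2\mathbf{u}^*\!\cdot(\mathbf{u}-\mathbf{u}^*)+|\mathbf{u}-\mathbf{u}^*|^2$ and integrating by parts the cross term via \eqref{stream_function} produces a matching volume term $-\int_\C\psi^*(\0-\0^*)\,\dsi$ together with a boundary contribution $\oint_{\p\C}\psi^*\,\p_n(\psi-\psi^*)\,\dell$. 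The volumetric cross terms cancel by construction, and multiplying through by $-2\lambda>0$ gives
\begin{equation*}
 -2\lambda\bigl(H[\mathbf{u}]-H[\mathbf{u}^*]\bigr)=-\lambda\|\mathbf{u}-\mathbf{u}^*\|^2_{L^2(\C)}+\|\0-\0^*\|^2_{L^2(\C)}-2\lambda\oint_{\p\C}\psi^*\,\p_n(\psi-\psi^*)\,\dell.
\end{equation*}

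To finish, I would note that, since $\psi^*$ takes the constant values $\psi_1,\psi_2$ on the two boundary circles, the boundary integral reduces to $\psi_1(\Gamma_1[\mathbf{u}]-\Gamma_1[\mathbf{u}^*])-\psi_2(\Gamma_2[\mathbf{u}]-\Gamma_2[\mathbf{u}^*])$, which is constant in time by the circulation conservation established above; and $H[\mathbf{u}]$ is conserved while $H[\mathbf{u}^*]$ is a fixed number, so the entire right-hand side of the last display is time-independent. Evaluating at $t$ and at $t=0$ and subtracting gives \eqref{stability_conclusion}. The delicate point is the boundary bookkeeping: the stream function associated with a time-dependent solution is determined only up to a $t$-dependent additive constant, so the normalization \eqref{BC_polar} must be used consistently, and one must verify that the apparent time-dependence of $\psi_1(t)$ in the boundary term is entirely absorbed by the Kelvin-type conservation of the $\Gamma_j$'s, as it is.
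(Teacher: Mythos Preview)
Your argument is correct and is essentially the same Arnold-type construction as the paper's: the paper packages the kinetic energy, the quadratic Casimir $(\Delta\psi+2\omega\sin\theta-\Upsilon)^2$, and the two boundary-circulation corrections into a single functional $\E_{\alpha,\beta}$ (with $\alpha=\psi_2\cos\theta_2$, $\beta=-\psi_1\cos\theta_1$) and then computes $\E(\psi)-\E(\psi^*)$ directly, whereas you keep $H=E+\int_\C\Phi(Q)\,\dsi$ and the circulation terms separate and argue each is conserved. The underlying cancellation---your choice $\Phi'(Q^*)=\psi^*$ killing the cross term after integrating $\int_\C\mathbf{u}^*\!\cdot(\mathbf{u}-\mathbf{u}^*)\,\dsi$ by parts---is identical, and your remark that the boundary term involves only the \emph{steady} values $\psi_1,\psi_2$ (not $\psi_1(t)$) is the correct resolution of the normalisation issue.
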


\section{A Cartesian reformulation and classical solutions} \label{classical_solutions}

The issue of existence of classical solutions to problem \eqref{Euler_spherical}--\eqref{BC_spherical} in the domain $\C$ can be discussed most conveniently by transferring the problem into Cartesian coordinates in the plane; this will enable us to rely on classical arguments for the well-posedness of the Euler equations in bounded two-dimensional domains.

Let $(x,y)$ denote the Cartesian coordinates in $\R^2$. The corresponding standard unit vectors are denoted $\ee_x = (1,0)$ and $\ee_y = (0,1)$. Given a domain $D\subseteq\R^2$, with sufficiently smooth boundary, and $T>0$, we will write $Q_T = D\times[0,T]$ and $\overline{Q}_T = \overline{D}\times[0,T]$. For $k\in\mathbb{N}\cup\{0\}$, the spaces $C^k(\overline{D})$ and $C^k(\overline{Q}_T)$ are the spaces of $k$-times continuously differentiable functions on $\overline{D}$ respectively $\overline{Q}_T$. We set $C(\overline{D}) = C^0(\overline{D})$. For $\delta\in (0,1)$, the space $C^{k+\delta}(\overline{D})$ is comprised of all functions whose first $k$ derivatives all exist and are H\"older continuous with exponent $\delta$ on $\overline{D}$.

For a scalar function $h = h(\x)$ (where $\x=(x,y)\in D$), we shall write
 \[\nabla h = h_x\ee_x + h_y\ee_y \quad \text{and} \quad \nabla^\perp h = h_y\ee_x - h_x\ee_y,\]
whereas for a vector function ${\bf f} = f\,\ee_x + g\,\ee_y$ we will denote
 \[\nabla\cdot{\bf f} = f_x+g_y \quad \text{and} \quad \nabla\times{\bf f} = g_x - f_y,\]
the former being the divergence and the latter being the (scalar) curl. It follows, for the Laplacian in Cartesian coordinates,
 \[\Delta h = h_{xx} + h_{yy} = \nabla\cdot\nabla h = - \nabla\times\nabla^\perp h.\]

We begin by projecting the problem onto a planar region on the equatorial plane by means of a stereographic projection with center of projection situated at the North Pole. We introduce the change of coordinates
 \[x = \frac{\cos\theta}{1-\sin\theta}\cos\varphi, \quad y = \frac{\cos\theta}{1-\sin\theta}\sin\varphi,\]
$(x,y)$ being the Cartesian coordinates in the equatorial plane. This defines a bijection from the domain $\C$ onto an annulus
 \[D = \{(x,y)\in\R^2 : r_1^2 < x^2+y^2 < r_2^2\},\]
where $r_i = \cos\theta_i/(1-\sin\theta_i)$, $i\in\{1,2\}$. The inverse of this coordinate transformation is easily computed by observing that
 \[\tan\varphi = \frac{y}{x} \quad \text{and} \quad \sin\theta = -\frac{1-x^2-y^2}{1+x^2+y^2};\]
in particular, we have
 \begin{align*}
 \p_\varphi &= -y\p_x + x\p_y, \\
 \p_\theta &= \frac{1}{\cos\theta}(x\p_x + y\p_y) = \frac{1+x^2+y^2}{2\sqrt{x^2+y^2}}(x\p_x + y\p_y).
 \end{align*}
While this coordinate change might seem cumbersome at first, it will in fact allow us to rewrite the equations \eqref{Euler_spherical} in a simple Cartesian form. To this end, we introduce the new unknown $\U = (U,V)$ and $P$ as
 \begin{subequations} \label{new_unknowns}
 \begin{align}
 U(x,y,t) &= (1-\sin\theta)[v(\varphi,\theta,t)\cos\varphi - u(\varphi,\theta,t)\sin\varphi] \nonumber \\
          &= \frac{2[xv(\varphi(x,y),\theta(x,y),t) - yu(\varphi(x,y),\theta(x,y),t)]}{(1+x^2+y^2)\sqrt{x^2+y^2}}, \\ \label{U_def}
 V(x,y,t) &= (1-\sin\theta)[u(\varphi,\theta,t)\cos\varphi + v(\varphi,\theta,t)\sin\varphi], \nonumber \\
          &= \frac{2[xu(\varphi(x,y),\theta(x,y),t) + yv(\varphi(x,y),\theta(x,y),t)]}{(1+x^2+y^2)\sqrt{x^2+y^2}}, \\ \label{V_def}
 P(x,y,t) &= p(\varphi(x,y),\theta(x,y),t).
 \end{align}
 \end{subequations}
The motivation for the definition of $U$ and $V$ is as follows. Denoting once again by ${\bf e}_x,{\bf e}_y$ the standard Cartesian unit vectors and by ${\bf e}_r,{\bf e}_\varphi$ the standard polar unit vectors in the plane, we have, at each point $(\varphi,r)$,
 \begin{equation} \label{derivatives}
 \begin{aligned}
 \mathbf{e}_r &= \cos\varphi\,\mathbf{e}_x + \sin\varphi\,\mathbf{e}_y, \\
 \mathbf{e}_\varphi &= -\sin\varphi\,\mathbf{e}_x + \cos\varphi\,\mathbf{e}_y;
 \end{aligned}
 \end{equation}
in particular, for any vector ${\bf v} = v^\varphi{\bf e}_\varphi + v^r{\bf e}_r = v^x{\bf e}_x + v^y{\bf e}_y$, its polar components $(v^\varphi,v^r)$ and Cartesian components $(v^x,v^y)$ are related via
 \begin{align*}
 v^x &= v^r\cos\varphi - v^\varphi\sin\varphi, \\
 v^y &= v^\varphi\cos\varphi + v^r\sin\varphi.
 \end{align*}
The additional factor $(1-\sin\theta)$ in \eqref{U_def} and \eqref{V_def} is simply a scaling that turns out to be convenient to write the equations more compactly.

Let us introduce for brevity the functions
 \[\alpha(x,y) = \frac{(1+x^2+y^2)^2}{4} \quad \text{and} \quad \beta(x,y) = 2\omega\frac{1-x^2-y^2}{1+x^2+y^2}\]
(note that $\alpha,1/\alpha,\beta \in C^\infty(\overline{D})$) and the linear map $J:\R^2\to\R^2$ for which $J\ee_x = \ee_y$ and $J\ee_y = -\ee_x$. Plugging \eqref{derivatives} into \eqref{Euler_spherical}, rearranging terms, and using \eqref{new_unknowns}, a rather lengthy calculation eventually yields the 2D Euler-like equation
 \begin{subequations} \label{Euler_compact}
 \begin{align}
 & \U_t + (\alpha\U\cdot\nabla)\U + \frac{|\U|^2}{2}\nabla\alpha + \beta J\U = -\nabla P, \label{Euler_compact_1} \\
 & \nabla\cdot\U = 0 \label{Euler_compact_2}
 \end{align}
 \end{subequations}
in $D$, with boundary condition
 \begin{equation} \label{Euler_compact_3}
 \U\cdot\mathbf{n} = 0 \quad \text{on } \p D;
 \end{equation}
here, ${\bf n}$ denotes the outward unit normal vector on  the boundary $\p D$ of $D$. The accordingly transformed initial data will be denoted by
 \begin{equation} \label{initial_condition}
 \U|_{t=0} = \U_0,
 \end{equation}
for some sufficiently regular function $\U_0:D\to\R^2$. We then have the following result concerning the existence and uniqueness of classical solutions:

\begin{theorem} \label{global_well-posedness}
Suppose that $\U_0\in C^{1+\vartheta}(\overline{D})$ for some $\vartheta\in(0,1)$, $\nabla\cdot\U_0 = 0$, and $\U_0\cdot{\bf n} = 0$. Furthermore, let $T > 0$. Then there exists a solution $(\U,P)$ to \eqref{Euler_compact}--\eqref{initial_condition} such that $\U$, $P$, and all their derivatives as appear in \eqref{Euler_compact} belong to $C(\overline{Q}_T)$. Such a solution is unique up to an arbitrary function of $t$ which may be added to $P$.
\end{theorem}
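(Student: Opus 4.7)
Following Kato's classical treatment \cite{Kat67} of the 2D incompressible Euler equations on a bounded planar domain, the plan is to reformulate \eqref{Euler_compact}--\eqref{initial_condition} in vorticity form, construct a unique local-in-time classical solution by a Picard-type iteration in H\"older spaces, and then globalise via an $L^\infty$-bound on the vorticity. The only genuinely new features compared to the textbook situation are the variable coefficient $\alpha$, the zeroth-order term $\beta J\U$, and the doubly-connected topology of $D$; since $\alpha,\,1/\alpha,\,\beta \in C^\infty(\overline{D})$ (the stereographic singularity at the origin being separated from $\overline{D}$), these are all tame perturbations.

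Concretely, I would take the scalar curl of \eqref{Euler_compact_1} to eliminate $P$. Setting $\Omega := \nabla\times\U$, the Euler-type nonlinearity produces $\alpha\,\U\cdot\nabla\Omega$ together with terms pointwise bounded by $|\U|\,|\nabla\U|$ with smooth coefficients; the extra contributions from $\tfrac{1}{2}|\U|^2\nabla\alpha$ and $\beta J\U$ reduce (using \eqref{Euler_compact_2}) to sources involving $\nabla^\perp\alpha$ and $\nabla\beta$. Conversely, $\U$ is recovered from $\Omega$ by a Biot--Savart-type inversion on the annulus: solve $\Delta\Psi = \Omega$ in $D$ with $\Psi$ constant on each boundary component, fixing one constant arbitrarily and determining the other by prescribing the circulation of $\U = \nabla^\perp\Psi$ around the inner circle---since $\alpha$ and $\beta$ are constant along each boundary circle, a direct Kelvin-type computation applied to \eqref{Euler_compact_1} shows that this circulation is preserved by the dynamics, so it is fixed once and for all by $\U_0$. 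Schauder theory on the smooth annulus then yields $\U \in C^{1+\vartheta}(\overline{D})$ from $\Omega \in C^\vartheta(\overline{D})$ with quantitative estimates.

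The iteration itself proceeds in the standard way: given $\U^{(n)}$, transport the initial vorticity along the characteristic flow $\dot{X} = \alpha(X)\,\U^{(n)}(X,t)$ while incorporating the lower-order source to obtain $\Omega^{(n+1)}$, then recover $\U^{(n+1)}$ via the inversion above. Contractivity in $C([0,T];C^{1+\vartheta}(\overline{D}))$ gives local existence on a time interval depending only on $\|\U_0\|_{C^{1+\vartheta}(\overline{D})}$, and globality follows from a Gr\"onwall control of $\|\Omega(t)\|_{L^\infty(D)}$ along characteristics, using a log-Lipschitz estimate of $\U$ in terms of $\Omega$ and the conservation of the energy $\int_D |\U|^2\dx\dy$ (which is inherited from \eqref{Euler_compact_1}, since $\U\cdot J\U = 0$ and the remaining terms integrate by parts cleanly against $\U$). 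The pressure is finally reconstructed, up to an additive function of $t$, by solving a Neumann problem with boundary data dictated by $\U_t\cdot{\bf n}=0$, and uniqueness of $\U$ follows from a standard $L^2$ energy argument on the difference of two solutions. The main technical obstacle I anticipate is the non-solenoidal character of the transport field $\alpha\U$, combined with the quadratic-in-$\U$ source in the vorticity equation arising from $\tfrac{1}{2}|\U|^2\nabla\alpha$; both features prevent a verbatim invocation of Kato's theorem, but can be absorbed into the Gr\"onwall scheme thanks to the uniform smoothness of $\alpha$ and $\beta$ on $\overline{D}$.
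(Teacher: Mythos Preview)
Your outline follows the right template, but it misses the one algebraic identity on which global existence actually hinges, and without it your Gr\"onwall step does not close.

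When you take the curl of \eqref{Euler_compact_1}, the contributions from $(\alpha\U\cdot\nabla)\U$, from $\tfrac{1}{2}|\U|^2\nabla\alpha$, and from $\beta J\U$ do not merely produce ``sources bounded by $|\U|\,|\nabla\U|$'' to be absorbed later: they combine \emph{exactly} into $\U\cdot\nabla(\alpha\Omega+\beta)$. In other words, the quantity $\zeta:=\alpha\Omega+\beta$ (the planar pull-back of the spherical absolute vorticity $\Delta\psi+2\omega\sin\theta$) satisfies the source-free transport equation $\zeta_t+(\alpha\U)\cdot\nabla\zeta=0$, so $\|\zeta(t)\|_{L^\infty}=\|\zeta_0\|_{L^\infty}$ for all $t$, and hence $\|\Omega(t)\|_{L^\infty}$ is uniformly bounded on $[0,T]$ for every $T$. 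This is precisely how the paper (and Kato) obtain the global bound. If instead you keep a source and attempt Gr\"onwall on $\|\Omega\|_{L^\infty}$, the best you can write along characteristics is $\tfrac{d}{dt}\|\Omega\|_{L^\infty}\lesssim \|\U\|_{L^\infty}(\|\Omega\|_{L^\infty}+1)$; since Biot--Savart only gives $\|\U\|_{L^\infty}\lesssim \|\Omega\|_{L^\infty}$ (energy conservation controls $\|\U\|_{L^2}$, not $\|\U\|_{L^\infty}$), this is a Riccati inequality and yields only a local bound. Your stated ``terms pointwise bounded by $|\U|\,|\nabla\U|$'' would make matters worse still, as $\|\nabla\U\|_{L^\infty}$ is not controlled by $\|\Omega\|_{L^\infty}$. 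Once you observe the $\zeta$-identity, your anticipated ``main technical obstacle'' (the non-solenoidal transport field $\alpha\U$ and the quadratic term $\tfrac12|\U|^2\nabla\alpha$) evaporates: $L^\infty$ is preserved along characteristics regardless of divergence.

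Two smaller points. First, the paper does not run a Picard contraction in $C^{1+\vartheta}$; following Kato it uses Schauder's fixed-point theorem on a compact convex set of H\"older vorticities, precisely because the map $\varphi\mapsto$ (transported vorticity) is continuous but not contractive in H\"older norms. Your claimed contractivity in $C([0,T];C^{1+\vartheta})$ would need justification. Second, your treatment of the harmonic part via conserved circulation is correct in spirit (indeed $\alpha$ and $\beta$ are constant on each boundary circle, and the Coriolis and pressure contributions to $\tfrac{d}{dt}\oint_{\Gamma_i}\U\cdot d\ell$ vanish), but the paper instead determines the coefficient of the harmonic vector field $\U^*$ through an ODE obtained by testing \eqref{Euler_compact_1} against $\U^*$; this is needed at the level of the iteration, where the given $\varphi$ is not yet the vorticity of $\U$.
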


The arguments required to prove this theorem are very similar in spirit to those in the classical paper \cite{Kat67}. Thus, as the proof of Theorem \ref{global_well-posedness} requires no new insights, it is postponed to Appendix \ref{appendix_classical_solutions}. Theorem \ref{Main_Theorem_1} is now an immediate consequence, in view of some simple observations. In fact, denoting by $(x,y)$ the Cartesian coordinates in the projection plane corresponding to the spherical coordinates $(\theta,\varphi)$, we have
 \begin{align*}
 U_0(x,y) = \delta(x,y)[xv_0(\theta,\varphi) - yu_0(\theta,\varphi)], \\
 V_0(x,y) = \delta(x,y)[xu_0(\theta,\varphi) + yv_0(\theta,\varphi)],
 \end{align*}
where $\delta(x,y) = 2[(1+x^2+y^2)\sqrt{x^2+y^2}]^{-1}$. The claim follows in view of the fact that $x\delta,y\delta\in C^\infty(\overline{D})$ (thus in particular $x\delta,y\delta\in C^{1+\vartheta}(\overline{D})$), and from the following lemma, since then ${\bf u}_0\in C^{1+\vartheta}(\overline{\C})$ implies $\U_0 \in  C^{1+\vartheta}(\overline{D})$.

\begin{lemma}
There exists a constant $C>0$ (depending only on $\C$, or equivalently $D$) such that
 \[d((\tilde{\varphi}_1,\tilde{\theta}_1),(\tilde{\varphi}_2,\tilde{\theta}_2)) \leq C\|(x_1,y_1)-(x_2,y_2)\|_2\]
for all  $(\tilde{\varphi}_1,\tilde{\theta}_1),(\tilde{\varphi}_2,\tilde{\theta}_2)\in\C$, with corresponding projections $(x_1,y_1),(x_2,y_2) \in D$ (where $\|\cdot\|_2$ denotes the Euclidean norm).
\end{lemma}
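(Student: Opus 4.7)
The plan is to show that the inverse stereographic projection, viewed as a map $\sigma : \overline{D} \to \overline{\C} \subset \S^2 \subset \R^3$, is Lipschitz continuous from the Euclidean metric on $\overline{D}$ to the geodesic metric on $\S^2$. I shall first reduce this to an extrinsic (chord) Lipschitz estimate: since, for any $P, Q \in \S^2$, one has the chord--geodesic identity $\|P - Q\|_{\R^3} = 2\sin(d(P,Q)/2)$, and since $\sin x \geq (2/\pi)x$ for $x \in [0, \pi/2]$, the inequality $d(P, Q) \leq (\pi/2)\|P - Q\|_{\R^3}$ holds throughout $\S^2$. It thus suffices to bound $\|\sigma(p) - \sigma(q)\|_{\R^3}$ by a constant multiple of $\|p - q\|_2$.

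Inverting the formulae at the beginning of Section \ref{classical_solutions}, the inverse stereographic projection reads
\[\sigma(x,y) = \left(\frac{2x}{1+x^2+y^2},\, \frac{2y}{1+x^2+y^2},\, \frac{x^2+y^2-1}{1+x^2+y^2}\right),\]
which is smooth on $\R^2 \setminus \{0\}$. Since $\overline{D}$ is compact and avoids the origin (as $r_1 > 0$), the differential $D\sigma$ is bounded by some constant $M > 0$ on the slightly enlarged compact annulus $A = \{(x,y) \in \R^2 : r_1/2 \leq \|(x,y)\|_2 \leq r_2\} \subset \R^2 \setminus \{0\}$.

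The main (and only) subtlety is that $\overline{D}$ is not convex, so the mean value theorem cannot be applied directly to two arbitrary points of $\overline{D}$. I resolve this by a short dichotomy. If $p, q \in \overline{D}$ satisfy $\|p - q\|_2 \leq r_1/2$, then for every $t \in [0, 1]$ the triangle inequality yields
\[\|tp + (1-t)q\|_2 \geq \min(\|p\|_2, \|q\|_2) - \|p - q\|_2 \geq r_1 - r_1/2 = r_1/2,\]
while trivially $\|tp + (1-t)q\|_2 \leq r_2$; hence the segment $[p, q]$ lies in $A$, and applying the MVT to $\sigma$ along this segment gives $\|\sigma(p) - \sigma(q)\|_{\R^3} \leq M\|p - q\|_2$. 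If instead $\|p - q\|_2 > r_1/2$, the diameter bound $\|\sigma(p) - \sigma(q)\|_{\R^3} \leq 2$ (since both endpoints lie on the unit sphere) immediately yields $\|\sigma(p) - \sigma(q)\|_{\R^3} < (4/r_1)\|p - q\|_2$.

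Combining the two cases with the geodesic--chord inequality yields the lemma with $C = (\pi/2)\max(M, 4/r_1)$, a constant depending only on $r_1, r_2$, i.e., only on $\C$.
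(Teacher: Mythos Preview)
Your proof is correct. Both arguments first reduce to a chord-length estimate via the same inequality $d \leq (\pi/2)\|\cdot\|_{\R^3}$; from there the paper proceeds by direct computation, obtaining the explicit $\ell^1$ Lipschitz constant $\frac{4(1+2r_2^2)}{(1+r_1^2)^2}$ for the inverse projection and then passing to $\ell^2$ by norm equivalence, whereas you invoke smoothness-plus-compactness and the mean value theorem. Your route is conceptually cleaner but carries an unnecessary complication: the map $\sigma$ is in fact smooth on all of $\R^2$ (the rational expressions have denominator $1+x^2+y^2$ and no singularity at the origin), so you could simply bound $D\sigma$ on the convex hull of $\overline{D}$---the closed disk of radius $r_2$---and apply the mean value inequality along straight segments directly, dispensing with the dichotomy entirely. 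The paper's approach buys an explicit constant; yours buys brevity once the spurious concern about the origin is dropped.
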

\begin{proof}
Let $(X_1,Y_1,Z_1),(X_2,Y_2,Z_2)\in\R^3$ be the Cartesian coordinates of points of $\S^2$ corresponding to $(\tilde{\varphi}_1,\tilde{\theta}_1),(\tilde{\varphi}_2,\tilde{\theta}_2)\in\C$ (via \eqref{spherical_coordinates}). Then
 \[(X_k,Y_k,Z_k) = \left(\frac{2x_k}{1+x_k^2+y_k^2},\frac{2y_k}{1+x_k^2+y_k^2},\frac{-1+x_k^2+y_k^2}{1+x_k^2+y_k^2}\right), \quad k\in\{1,2\}.\]
Some straightforward calculations yield
 \[\|(X_1,Y_1,Z_1) - (X_2,Y_2,Z_2)\|_1 \leq \frac{4(1+2r_2^2)}{(1+r_1^2)^2}\|(x_1,y_1) - (x_2,y_2)\|_1,\]
where $\|\cdot\|_1$ denotes the $\ell^1$-norm in $\R^n$, $n\in\{2,3\}$. The equivalence of norms on $\R^n$ now implies
 \[\|(X_1,Y_1,Z_1) - (X_2,Y_2,Z_2)\|_2 \leq \tilde{C}\|(x_1,y_1) - (x_2,y_2)\|_2\]
for some $\tilde{C} > 0$. Since
 \[\|(X_1,Y_1,Z_1) - (X_2,Y_2,Z_2)\|_2 \leq d((\tilde{\varphi}_1,\tilde{\theta}_1),(\tilde{\varphi}_2,\tilde{\theta}_2)) \leq \frac{\pi}{2}\|(X_1,Y_1,Z_1) - (X_2,Y_2,Z_2)\|_2\]
for all $(\tilde{\varphi}_1,\tilde{\theta}_1),(\tilde{\varphi}_2,\tilde{\theta}_2)\in\C$, the claim follows with $C = \frac{\pi}{2}\tilde{C}$.
\end{proof}

\section{Existence and stability of a class of zonal solutions}

In this section we will seek zonal solution of equation \eqref{stationary_radial} (i.e., solutions that depend solely on $\theta$) in the case where $F$ is given by \eqref{F(psi)}. In other words, for given $\lambda,\Upsilon \in\R$, we are seeking a function $\Psi_{\lambda,\Upsilon} = \Psi_{\lambda,\Upsilon}(\theta)$ which solves the following ODE problem (where a prime denotes a derivative with respect to $\theta$):
 \begin{align}\label{radial_problem}
 \begin{split}
 &\big(\Psi_{\lambda,\Upsilon}'(\theta)\cos\theta\big)' = -\lambda\Psi_{\lambda,\Upsilon}(\theta)\cos\theta + \Upsilon\cos\theta - \omega\sin2\theta, \quad \theta \in (\theta_1,\theta_2), \\
 &\Psi_{\lambda,\Upsilon}(\theta_1) = \psi_1, \ \Psi_{\lambda,\Upsilon}(\theta_2) = \psi_2,
 \end{split}
 \end{align}
for $\psi_2\neq \psi_1$.

\subsection{Zonal solutions via Sturm--Liouville} \label{radial_Sturm-Liouville}

Let us recollect some well-known facts about Sturm--Liouville problems (see, e.g., \cite{Tes12} for proofs):

\begin{lemma} \label{Sturm-Liouville_lemma}
Let $a,b\in\R$, $a<b$, and suppose that the functions $p\in C^1([a,b])$ and $w\in C([a,b])$ are both positive: $p(x)> 0$ and $w(x)>0$ for all $x\in[a,b]$. Also, let $q\in C([a,b])$ and $\alpha,\beta,\gamma,\delta\in\R$ with $(\alpha,\beta) \neq (0,0) \neq (\gamma,\delta)$. Then there exists a sequence $\mu_1<\mu_2<\ldots\to+\infty$ and, for each $n\in\mathbb{N}$, a function $y_n\in C^2([a,b])$ such that
 \begin{align*}
 & (p(x)y'_n(x))' + q(x)y_n(x) = - \mu_n w(x)y_n(x), \quad x\in (a,b), \\
 & \alpha y_n(a) + \beta y_n'(a) = 0, \\
 & \gamma y_n(b) + \delta y_n'(b) = 0.
 \end{align*}
In addition, all solutions to this problem are a multiple of $y_n$ (i.e., each eigenvalue $\mu_n$ is simple), and $y_n$ has exactly $n-1$ zeros. Moreover, the set $\{y_n : n\in\mathbb{N}\}$ is an orthonormal basis of the weighted $L^2$-space $L^2([a,b];w(x)\dx)$ with respect to the inner product
 \[\langle f,g\rangle = \int_a^b f(x)g(x)w(x)\dx\]
(that is, $\langle y_n,y_m\rangle = \delta_{mn}$). The eigenfunction expansion
 \[f = \sum_{n=1}^\infty\langle f,y_n\rangle y_n\]
converges uniformly for all $f\in C^2([a,b])$ with $f(a) = f(b) = 0$. Finally, each eigenvalue $\mu_n$ can be recovered from a corresponding eigenfunction $y_n$ via the \emph{Rayleigh quotient}:
 \begin{equation} \label{Rayleigh}
 \mu_n = \frac{-py_ny_n'|^a_b + \int_a^b\big[p(y_n')^2 - qy_n^2\big]\dx}{\langle y_n,y_n\rangle}.
 \end{equation}
\end{lemma}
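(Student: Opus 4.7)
The plan is to recast the boundary-value problem as an eigenvalue problem for a compact self-adjoint operator and then invoke the spectral theorem, supplementing this with classical Pr\"ufer oscillation arguments and a routine integration by parts.

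First I would shift $q$ by a constant multiple of $w$ if necessary so that $0$ is not an eigenvalue of the formal operator $Ly := -\frac{1}{w}\bigl[(py')' + qy\bigr]$, and then construct a Green's function $G(x,t)$ for the corresponding boundary-value problem out of two non-trivial solutions of the homogeneous equation, each satisfying one of the separated boundary conditions, with their Wronskian normalised to a nonzero constant. This yields a bounded integral operator $K$ on the weighted Hilbert space $L^2([a,b];w\dx)$ which inverts $L$. Since $K$ has a continuous symmetric kernel and is self-adjoint with respect to the weighted inner product, it is Hilbert--Schmidt, and the spectral theorem produces a sequence of real eigenvalues of $K$ accumulating only at $0$, together with an orthonormal basis of eigenfunctions. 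Passing to reciprocals delivers the eigenvalues $\mu_n\to+\infty$ of the original problem with the same eigenfunctions $y_n$. Simplicity of each $\mu_n$ is an immediate consequence of the uniqueness theorem for second-order linear ODEs: any eigenfunction is determined up to a scalar by the single boundary relation at $x=a$.

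For the count of zeros I would switch to Pr\"ufer variables $y = \rho\sin\phi$, $py' = \rho\cos\phi$, which convert the eigenvalue equation into a first-order ODE for $\phi$. Strict monotonicity of $\phi(b;\mu)$ in $\mu$, combined with the Sturm comparison theorem, then forces the eigenfunction associated with the $n$-th eigenvalue (in increasing order) to have exactly $n-1$ interior zeros. The Rayleigh quotient \eqref{Rayleigh} follows by multiplying the equation for $y_n$ by $y_n$, integrating over $[a,b]$, and integrating the $(py_n')'$ term by parts once, which generates the boundary contribution $-p y_n y_n'\big|_a^b$ in the numerator.

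The step I expect to require the most care is the uniform convergence of the eigenfunction expansion. Given $f\in C^2([a,b])$ with $f(a) = f(b) = 0$, one sets $g := Lf \in C([a,b])$, so that $f = Kg$, and expands $g = \sum_n c_n y_n$ in the weighted $L^2$; applying $K$ yields $f = \sum_n \mu_n^{-1} c_n y_n$, and Cauchy--Schwarz on the tail reduces matters to controlling $\sum_n \mu_n^{-2}\|y_n\|_\infty^2$. This is where one needs the Weyl-type asymptotics $\mu_n \sim C n^2$ together with a uniform $L^\infty$-bound on the normalised eigenfunctions, both of which come out of a more quantitative analysis of the Pr\"ufer system. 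Since all of these ingredients are classical and worked out in detail in \cite{Tes12}, I would ultimately refer the reader there after fixing the structural outline above.
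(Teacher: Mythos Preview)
Your outline is correct and follows the standard textbook route (Green's function/compact self-adjoint inverse, Pr\"ufer variables for the oscillation theorem, integration by parts for the Rayleigh quotient). The paper itself gives no proof of this lemma at all: it is stated as a recollection of ``well-known facts'' and the reader is simply referred to \cite{Tes12}, which is exactly the source you end up citing as well.
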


At several places throughout the paper (in later sections as well) we are going to additionally require some knowledge about inhomogeneous Sturm--Liouville problems; this is the object of the next lemma.

\begin{lemma} \label{lemma_inhomogeneous}
Let $a,b\in\R$, $a<b$, and suppose that the functions $p\in C^1([a,b])$ and $w\in C([a,b])$ are both positive: $p(x)> 0$ and $w(x)>0$ for all $x\in[a,b]$. Also, let $q,h\in C([a,b])$, and $\alpha,\beta,\gamma,\delta\in\R$ with $(\alpha,\beta) \neq (0,0) \neq (\gamma,\delta)$. For $\mu\in\R$, consider the inhomogeneous Sturm--Liouville problem
 \begin{subequations} \label{inhomogeneous}
 \begin{align}
 & (p(x)y'(x))' + q(x)y(x) = - \mu w(x)y(x) + h(x), \quad x\in (a,b), \label{inhomogeneous_a} \\
 & \alpha y(a) + \beta y'(a) = 0, \\
 & \gamma y(b) + \delta y'(b) = 0.
 \end{align}
 \end{subequations}
Let $\mu_1 < \mu_2 < \ldots$ be the eigenvalues of the homogeneous problem ($h\equiv 0$), with corresponding eigenfunctions $y_n$ ($n\in\mathbb{N}$), scaled so that $\langle y_n,y_n\rangle = 1$ for all $n\in\mathbb{N}$. Then the inhomogeneous problem \eqref{inhomogeneous} has a unique solution if and only if $\mu \notin \{\mu_n : n\in\mathbb{N}\}$. If $\mu = \mu_k$ for some $k\in\mathbb{N}$, then:
 \begin{itemize}
 \item[(a)] either $\langle\frac{h}{w},y_k\rangle \neq 0$, in which case there is no solution, or
 \item[(b)] $\langle\frac{h}{w},y_k\rangle = 0$, in which case there are infinitely many solutions, which are of the form
  \[y(x) = \sum_{n= 1}^\infty b_n y_n(x),\]
 where
  \[b_n = \frac{\langle\frac{h}{w},y_k\rangle}{\mu - \mu_n} \quad \text{if } n \neq k\]
 and $b_k$ is an arbitrary real number.
 \end{itemize}
\end{lemma}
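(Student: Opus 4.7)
The plan is to combine classical ODE theory with a Fredholm-alternative argument, exploiting the formal self-adjointness of $Ly := (py')' + qy$ on $L^2([a,b]; w\,dx)$. First, for any $\mu\in\R$, I would invoke standard ODE existence and uniqueness to produce nontrivial solutions $\phi_a, \phi_b$ of the homogeneous equation $Ly + \mu wy = 0$, with $\phi_a$ satisfying the left boundary condition and $\phi_b$ the right one, each determined up to a scalar. A brief argument then shows $\phi_a, \phi_b$ are linearly dependent if and only if $\mu \in \{\mu_n : n\in\mathbb{N}\}$: if they are proportional, the common function satisfies both boundary conditions and is thus an eigenfunction; conversely, simplicity of the eigenvalues in Lemma \ref{Sturm-Liouville_lemma} forces any eigenfunction $y_n$ to be a multiple of both $\phi_a$ and $\phi_b$.

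In the non-resonant case $\mu\notin\{\mu_n\}$, linear independence of $\phi_a, \phi_b$ combined with Abel's identity gives $p\cdot W[\phi_a,\phi_b]\equiv c$ for some nonzero constant $c$, and the classical Green's function
\[
G(x,\xi) = \frac{1}{c}\begin{cases} \phi_a(x)\phi_b(\xi), & a\leq x\leq \xi,\\ \phi_a(\xi)\phi_b(x), & \xi\leq x\leq b,\end{cases}
\]
furnishes the unique $C^2$ solution $y(x)=\int_a^b G(x,\xi)h(\xi)\,d\xi$ of \eqref{inhomogeneous}, settling the first claim of the lemma.

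In the resonant case $\mu=\mu_k$, necessity of the orthogonality condition is obtained by multiplying \eqref{inhomogeneous_a} by $y_k$, integrating over $[a,b]$, and integrating by parts twice; the boundary terms cancel because $y$ and $y_k$ satisfy identical boundary conditions at $a$ and at $b$, leaving $\int_a^b h\,y_k\,dx = \langle h/w, y_k\rangle = 0$. This establishes case (a). If instead $\langle h/w, y_k\rangle = 0$, existence of a particular solution follows by variation of parameters on a pair consisting of $y_k$ and a second linearly independent homogeneous solution, with the orthogonality condition being exactly what allows the free parameter to be fixed so as to fulfil both boundary conditions; uniqueness then fails up to addition of an arbitrary multiple of $y_k$, yielding the one-parameter family of case (b).

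Finally, the explicit coefficient formula is recovered by expanding $y = \sum_n b_n y_n$ in the orthonormal basis of Lemma \ref{Sturm-Liouville_lemma}, pairing the equation with $y_m$ in the inner product $\langle\cdot,\cdot\rangle$, and using self-adjointness to move $L$ onto $y_m$; this yields $(\mu-\mu_m)b_m = \langle h/w, y_m\rangle$, which uniquely determines $b_m$ for $\mu_m\neq\mu$ and leaves $b_k$ free in the resonant case. The main obstacle is ensuring that a formal series $\sum b_n y_n$ converges to an honest $C^2$ solution; I would circumvent this by first securing existence and uniqueness at the $C^2$ level via the Green's function (in the non-resonant case) and variation of parameters (in the resonant case), and only afterwards extracting the coefficients $b_n$ a posteriori from the already-constructed solution using $L^2(w\,dx)$-orthogonality.
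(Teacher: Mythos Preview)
Your argument is correct but proceeds along a genuinely different route from the paper. The paper works entirely with the eigenfunction expansion: it writes $h/w = \sum_n c_n y_n$ and a putative solution $y = \sum_n b_n y_n$, substitutes formally into \eqref{inhomogeneous_a}, and reads off $(\mu-\mu_n)b_n = c_n$ for every $n$; the three cases (non-resonant, resonant with $c_k\neq 0$, resonant with $c_k=0$) drop out immediately from this single relation. Your approach instead builds the solution at the $C^2$ level first---via the Green's function assembled from $\phi_a,\phi_b$ in the non-resonant case, and via variation of parameters in the resonant case---and only then extracts the Fourier coefficients a posteriori. What you gain is exactly what you flag at the end: the paper's short argument is formal and does not address whether the series $\sum_n b_n y_n$ actually converges to a classical $C^2$ solution, whereas your construction sidesteps that issue entirely. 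The cost is a longer proof with more machinery (Wronskian, Abel's identity, Green's kernel) where the paper gets away with a few lines of algebra.
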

\begin{proof}
Suppose that $\mu\notin\{\mu_n : n\in\mathbb{N}\}$. Let $c_n = \langle\frac{h}{w},y_n\rangle$, thus
 \[\frac{h(x)}{w(x)} = \sum_{n=1}^\infty c_n y_n(x)\]
(with convergence in the Hilbert space $L^2([a,b];w(x)\dx)$). Any solution $y$ of the inhomogeneous problem \eqref{inhomogeneous} must have an eigenfunction expansion
 \[y = \sum_{n=1}^\infty b_n y_n;\]
we now show that appropriate coefficients $(b_n)_{n\in\mathbb{N}}$ can indeed be uniquely determined. Since
 \[(p(x)y_n'(x))' + q(x)y_n(x) + \mu w(x)y_n(x) = (\mu - \mu_n)w(x)y_n(x),\]
plugging into \eqref{inhomogeneous_a} yields
 \[\frac{h(x)}{w(x)} = \sum_{n=1}^\infty b_n(\mu-\mu_n)y_n(x),\]
hence
 \begin{equation} \label{coefficients}
 (\mu-\mu_n)b_n = c_n, \quad n\in\mathbb{N},
 \end{equation}
and thus $b_n = \frac{c_n}{\mu-\mu_n}$ for all $n\in\mathbb{N}$.

If, on the other hand, $\mu = \mu_k$ for some $k\in\mathbb{N}$, and $c_k \neq 0$, then the equality \eqref{coefficients} cannot be satisfied for $n=k$, hence there is no solution in this case. Finally, if $\mu = \mu_k$ for some $k\in\mathbb{N}$ and $c_k = 0$, the equality \eqref{coefficients} is satisfied for any choice of the coefficient $b_k$; this concludes the proof.
\end{proof}

Combining the previous two lemmas, we obtain a family of zonal solutions to \eqref{radial_problem}.

\begin{proposition} \label{one-parameter_family}
Let $0 < \lambda_1 < \lambda_2 < \ldots\to+\infty$ be the sequence of eigenvalues (provided by Lemma \ref{Sturm-Liouville_lemma}) of the problem
 \begin{align} \label{Sturm-Liouville_homogeneous}
 \begin{split}
 &\big(\Psi'(\theta)\cos\theta\big)' = -\lambda\Psi(\theta)\cos\theta, \quad \theta \in (\theta_1,\theta_2),  \\
 &\Psi(\theta_1) = \Psi(\theta_2) = 0.
 \end{split}
 \end{align}
Then the inhomogeneous problem \eqref{radial_problem} has a unique solution $\Psi_{\lambda,\Upsilon}$ for all $\lambda \notin \{\lambda_n : n\in\mathbb{N}\}$.
\end{proposition}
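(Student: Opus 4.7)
The proof will proceed in two main steps, both of which are essentially direct applications of the two Sturm--Liouville lemmas stated above, together with a simple reduction to homogeneous boundary data.

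First I would observe that the homogeneous eigenvalue problem \eqref{Sturm-Liouville_homogeneous} is exactly of the form covered by Lemma \ref{Sturm-Liouville_lemma}, with $p(\theta) = \cos\theta$, $w(\theta) = \cos\theta$, and $q\equiv 0$, $\alpha = \gamma = 1$, $\beta = \delta = 0$. Since $-\pi/2 < \theta_1 < \theta_2 < 0$, both $p$ and $w$ are strictly positive and smooth on $[\theta_1,\theta_2]$, so Lemma \ref{Sturm-Liouville_lemma} yields the claimed sequence $\lambda_1 < \lambda_2 < \ldots \to +\infty$ of simple eigenvalues with associated orthonormal eigenfunctions $(y_n)_{n\in\mathbb{N}}$. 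To verify that $\lambda_1 > 0$ (as asserted in the statement), I would invoke the Rayleigh quotient \eqref{Rayleigh}: since the boundary conditions are homogeneous Dirichlet, the boundary term vanishes, and since $q \equiv 0$ one gets
\[\lambda_n = \frac{\int_{\theta_1}^{\theta_2} \cos\theta\, (y_n'(\theta))^2\, \dtheta}{\int_{\theta_1}^{\theta_2} \cos\theta\, y_n(\theta)^2\,\dtheta},\]
which is strictly positive because $\cos\theta > 0$ on $[\theta_1,\theta_2]$ and $y_n$ cannot be identically zero (hence $y_n' \not\equiv 0$, since otherwise $y_n$ would be constant and hence zero by the Dirichlet boundary conditions).

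To handle the inhomogeneous problem \eqref{radial_problem} with non-zero boundary data, I would reduce to homogeneous boundary conditions by setting
\[L(\theta) = \psi_1 + \frac{\psi_2-\psi_1}{\theta_2-\theta_1}(\theta-\theta_1), \qquad \tilde{\Psi}(\theta) = \Psi_{\lambda,\Upsilon}(\theta) - L(\theta),\]
so that $\tilde{\Psi}(\theta_1) = \tilde{\Psi}(\theta_2) = 0$ and $\tilde{\Psi}$ satisfies
\[(\cos\theta\,\tilde{\Psi}'(\theta))' + \lambda\cos\theta\,\tilde{\Psi}(\theta) = h(\theta),\]
where
\[h(\theta) = \Upsilon\cos\theta - \omega\sin 2\theta - (\cos\theta\, L'(\theta))' - \lambda\cos\theta\, L(\theta) \in C([\theta_1,\theta_2]).\]
This is precisely the setting of Lemma \ref{lemma_inhomogeneous} with $\mu = \lambda$. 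Since by hypothesis $\lambda \notin \{\lambda_n : n\in\mathbb{N}\}$, part of the lemma immediately yields existence and uniqueness of $\tilde{\Psi}$, and hence of $\Psi_{\lambda,\Upsilon} = \tilde{\Psi} + L$. The $C^2$ regularity asserted implicitly by \eqref{radial_problem} follows from the Sturm--Liouville framework together with the regularity of $L$.

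There is no real obstacle here, as the proposition is essentially just a synthesis of the two preceding lemmas; the only item requiring mild care is the positivity of the eigenvalues, which is handled by the Rayleigh quotient argument above.
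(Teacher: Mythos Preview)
Your proof is correct and follows essentially the same route as the paper: subtract the affine function $a\theta+b$ (your $L(\theta)$) to reduce to homogeneous Dirichlet data, then invoke Lemma~\ref{lemma_inhomogeneous}. You additionally supply the Rayleigh-quotient verification that $\lambda_1>0$, which the paper asserts in the statement but does not argue explicitly in its proof.
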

\begin{proof}
By setting
 \[\tilde{\Psi}(\theta) = \Psi_{\lambda,\Upsilon}(\theta) - (a\theta+b),\]
where
 \[a = \frac{\psi_2 - \psi_1}{\theta_2-\theta_1} \quad \text{and} \quad b = \frac{\theta_2\psi_1 - \theta_1\psi_2}{\theta_2-\theta_1},\]
problem \eqref{radial_problem} transforms to
 \begin{align*}
 &\big(\tilde{\Psi}'(\theta)\cos\theta\big)' = -\lambda\tilde{\Psi}(\theta)\cos\theta + a\sin\theta + [\Upsilon - \lambda(a\theta+b)]\cos\theta - \omega\sin 2\theta, \quad \theta \in (\theta_1,\theta_2),  \\
 &\tilde{\Psi}(\theta_1) = \tilde{\Psi}(\theta_2) = 0,
 \end{align*}
which has homogeneous Dirichlet boundary conditions. The claim then follows from Lemma \ref{lemma_inhomogeneous}.
\end{proof}

\begin{remark} \label{remark}
Note that, by the weak maximum principle (see, e.g., \cite{GilTru01}), solutions to problem \eqref{stationary_radial} are unique if $\lambda \leq 0$. Therefore, the radial solutions for $\lambda \leq 0$ are in fact \emph{all} solutions for such values of $\lambda$.
\end{remark}

\begin{remark}
It is possible to infer that $\lambda\notin\{\lambda_n : n\in\mathbb{N}\}$ if the annular domain $\C$ is sufficiently ``narrow''; although this is not going to be relevant for the rest of the paper, a brief discussion of this fact can nonetheless be found in Appendix \ref{Appendix}.
\end{remark}

\subsection{Stability} \label{stability}

We begin the discussion of stability by identifying certain conserved quantities.

\begin{proposition} \label{conserved_quantities}
Let $f\in C^1(\R)$ be arbitrary. Then the quantities
 \begin{subequations}
 \begin{gather} 
 \frac{1}{2}\iint_\C\bigg(\psi_\theta^2(\varphi,\theta,t) + \frac{1}{\cos^2\theta}\psi_\varphi^2(\varphi,\theta,t)\bigg)\dsi, \label{conserved_quantity_1}\\
 \int_0^{2\pi}\psi_\theta(\varphi,\theta_1,t)\dphi, \quad \int_0^{2\pi}\psi_\theta(\varphi,\theta_2,t)\dphi \label{conserved_quantity_2}
 \end{gather}
and
 \begin{equation} \label{conserved_quantity_3}
 \iint_\C f(\Delta\psi(\varphi,\theta,t) + 2\omega\sin\theta)\dsi
 \end{equation}
 \end{subequations}
are conserved by smooth solutions of \eqref{main_time_dependent} with boundary condition \eqref{BC_polar}.
\end{proposition}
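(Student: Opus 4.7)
My plan is to recast \eqref{main_time_dependent} as a transport equation for the total vorticity $q := \Delta\psi + 2\omega\sin\theta$. Since $2\omega\sin\theta$ is time-independent, $(\Delta\psi)_t = q_t$, and using the stream-function identities \eqref{stream_function} the equation becomes
\[ q_t + \mathbf{u}\cdot\nabla q = 0 \quad \text{in } \C, \]
with $\mathbf{u} = u\,\ee_\varphi + v\,\ee_\theta$. A short calculation using the spherical expression for the divergence gives $\operatorname{div}\mathbf{u} = 0$ in $\C$, and \eqref{BC_polar} yields $\mathbf{u}\cdot\n = \pm v = 0$ on $\p\C$. With the transport formulation in hand, \eqref{conserved_quantity_3} follows at once: multiplying by $f'(q)$ and using incompressibility gives $\p_t f(q) + \operatorname{div}(f(q)\mathbf{u}) = 0$, and integrating over $\C$ produces a boundary flux $\oint_{\p\C} f(q)\,\mathbf{u}\cdot\n\,\dell$ that vanishes since $\mathbf{u}\cdot\n = 0$ on $\p\C$.

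For the circulations \eqref{conserved_quantity_2}, I would argue directly from \eqref{Euler_spherical_1}. Restricted to $\theta = \theta_i$ (where $v \equiv 0$), the equation collapses to
\[ u_t + \frac{u\,u_\varphi}{\cos\theta_i} = -\frac{p_\varphi}{\cos\theta_i}. \]
Writing $u\,u_\varphi = \tfrac{1}{2}(u^2)_\varphi$ and integrating in $\varphi$ over $[0,2\pi)$, both the nonlinear term and the pressure gradient vanish by $2\pi$-periodicity, leaving $\tfrac{d}{dt}\int_0^{2\pi} u(\varphi,\theta_i,t)\dphi = 0$. Since $u = -\psi_\theta$, this is precisely the conservation of \eqref{conserved_quantity_2}.

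For the kinetic energy \eqref{conserved_quantity_1}, which coincides with $\tfrac{1}{2}\iint_\C |\nabla\psi|^2\,\dsi = \tfrac{1}{2}\iint_\C |\mathbf{u}|^2\,\dsi$, I would multiply the vorticity transport equation by $\psi$ and integrate over $\C$. The advective contribution vanishes identically: integration by parts, combined with $\operatorname{div}\mathbf{u} = 0$ and $\mathbf{u}\cdot\n = 0$ on $\p\C$, gives
\[ \iint_\C \psi\,\mathbf{u}\cdot\nabla q\,\dsi = -\iint_\C q\,\mathbf{u}\cdot\nabla\psi\,\dsi = 0, \]
the last equality being the defining orthogonality $\mathbf{u}\cdot\nabla\psi \equiv 0$ of the stream function. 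Hence $\iint_\C \psi\,(\Delta\psi)_t\,\dsi = 0$. On the other hand, Green's identity on $\C$, together with the fact that $\psi$ is constant on each boundary component, rewrites this integral as
\[ -\frac{d}{dt}\left[\frac{1}{2}\iint_\C |\nabla\psi|^2\,\dsi\right] + \sum_{i=1,2} \psi_i(t)\,\frac{d}{dt}\oint_{\{\theta=\theta_i\}} \p_n\psi\,\dell. \]
Each boundary integral equals $\pm\cos\theta_i\int_0^{2\pi}\psi_\theta(\varphi,\theta_i,t)\dphi$ and is therefore proportional to a circulation from \eqref{conserved_quantity_2}, which we have already shown to be time-independent; the boundary sum thus vanishes and we conclude conservation of \eqref{conserved_quantity_1}.

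The chief subtlety is this final boundary computation: because $\psi_1(t)$ may genuinely depend on time, one cannot simply discard the boundary contribution in the energy identity. The resolution is that it factors as $\psi_i(t)$ times a conserved circulation, which is precisely why \eqref{conserved_quantity_1} has to be established after, or in tandem with, \eqref{conserved_quantity_2}. Everything else in the argument is routine manipulation using incompressibility, the stream-function structure $\mathbf{u}\cdot\nabla\psi \equiv 0$, and the tangency boundary condition $v|_{\p\C} = 0$.
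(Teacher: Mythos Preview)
Your proof is correct and follows essentially the same route as the paper: both establish the circulations \eqref{conserved_quantity_2} first by restricting \eqref{Euler_spherical_1} to the boundary, then use the conservation of those circulations to kill the boundary terms arising in the energy identity for \eqref{conserved_quantity_1}, and handle the Casimirs \eqref{conserved_quantity_3} via the transport structure and integration by parts. The only cosmetic difference is that you phrase things in the invariant language of $q_t + \mathbf{u}\cdot\nabla q = 0$ and $\mathbf{u}\cdot\nabla\psi \equiv 0$, whereas the paper writes the same computations out in the spherical coordinates $(\varphi,\theta)$.
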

\begin{remark}
The expression \eqref{conserved_quantity_1} is the kinetic energy, the quantities in \eqref{conserved_quantity_2} are the circulations along the ``lower" and ``upper" boundary, and \eqref{conserved_quantity_3} corresponds to the so-called \emph{Casimir invariants} (see \cite{ConGer22}).
\end{remark}
\begin{proof}
First of all, evaluating \eqref{Euler_spherical_1} on $\{\theta = \theta_i\}$ ($i\in\{1,2\}$), and taking \eqref{BC_spherical} into account, we see that
 \[\psi_{\theta t}|_{\theta=\theta_i} = - u_t|_{\theta=\theta_i} = \bigg(\frac{u^2}{2\cos\theta}\bigg|_{r=r_1} + \frac{p}{\cos\theta}\bigg|_{r=r_1}\bigg)_\phi,\]
hence the quantities in \eqref{conserved_quantity_2} are conserved. Consequently, using \eqref{main_time_dependent}, the boundary condition \eqref{BC_polar} (which implies that $\psi_\varphi = 0$ on the boundary), and the $2\pi$-periodicity in the $\varphi$-argument, we see via integration by parts that
 \begin{align*}
 \p_t\iint_\C\frac{1}{2}\bigg(\psi_\theta^2 + \frac{1}{\cos^2\theta}\psi_\varphi^2\bigg)\dsi &= \iint_\C\bigg(\psi_\theta\psi_{\theta t} + \frac{1}{\cos^2\theta}\psi_\varphi\psi_{\varphi t}\bigg)\dsi \\
 &= \psi_2\cos\theta_2\int_0^{2\pi}\psi_{\theta t}|_{r=r_2}\dphi - \psi_1\cos\theta_1\int_0^{2\pi}\psi_{\theta t}|_{r=r_1}\dphi \\
 &\quad - \iint_\C\psi(\Delta\psi)_t\dsi \\
 &= \int_0^{2\pi}\int_{\theta_1}^{\theta_2}\psi\big[\psi_\varphi \p_\theta - \psi_\theta \p_\varphi\big](\Delta\psi + 2\omega\sin\theta)\dtheta\dphi = 0;
 \end{align*}
this establishes the claim for \eqref{conserved_quantity_1}. As for \eqref{conserved_quantity_3}, using \eqref{main_time_dependent} and \eqref{BC_polar}, it follows
 \begin{align*}
 &\p_t\iint_\C f(\Delta\psi + 2\omega\sin\theta)\dsi \\
 &\quad\quad = \int_0^{2\pi}\int_{\theta_1}^{\theta_2}\big\{\psi_\theta\big[f(\Delta\psi + 2\omega\sin\theta)\big]_\varphi - \psi_\varphi\big[f(\Delta\psi + 2\omega\sin\theta)\big]_\theta\big\}\dtheta\dphi = 0;
 \end{align*}
this completes the proof.
\end{proof}

In view of Proposition \ref{conserved_quantities}, we see that the functional
 \begin{align*}
 \E_{\alpha,\beta}(\psi) &= \frac{1}{2}\iint_\C\bigg[-\lambda\bigg(\psi_\theta^2 + \frac{\psi_\varphi^2}{\cos^2\theta}\bigg) + (\Delta\psi + 2\omega\sin\theta - \Upsilon)^2\bigg]\dsi \\
 &\quad + \lambda\left\{\alpha\int_0^{2\pi}\psi_\theta|_{\theta=\theta_2}\dphi + \beta\int_0^{2\pi}\psi_\theta|_{\theta=\theta_1}\dphi\right\}
 \end{align*}
is constant along smooth solutions to \eqref{main_time_dependent} and \eqref{BC_polar} for any $\alpha,\beta\in\R$. For such a solution $\psi$, we may compute the first and second variation of $\E_{\alpha,\beta}(\psi)$, to find
 \begin{align*}
 \frac{{\rm d}}{{\rm d}s}\E_{\alpha,\beta}(\psi+s\xi)\bigg|_{s=0} &= \iint_\C(\lambda\psi - \Upsilon + \Delta\psi + 2\omega\sin\theta)\Delta\xi\dsi \\
 &\quad + \lambda\bigg\{(\alpha-\psi_2\cos\theta_2)\int_0^{2\pi}\xi_\theta|_{\theta=\theta_2}\dphi \\
 &\hspace{1.2cm} + (\beta+\psi_1(t)\cos\theta_1)\int_0^{2\pi}\xi_\theta|_{\theta=\theta_1}\dphi\bigg\}
 \end{align*}
and
 \begin{align*}
 \frac{{\rm d^2}}{{\rm d}s^2}\E_{\alpha,\beta}(\psi+s\xi)\bigg|_{s=0} &= \iint_\C\bigg[-\lambda\bigg(\xi_\theta^2 + \frac{\xi_\varphi^2}{\cos^2\theta}\bigg) + (\Delta\xi)^2\bigg]\dsi.
 \end{align*}
Therefore, wee see that, choosing
 \begin{equation} \label{alpha_beta}
 \alpha = \psi_2\cos\theta_2 \quad \text{and} \quad \beta = -\psi_1\cos\theta_1,
 \end{equation}
the zonal solution $\psi = \Psi_{\lambda,\Upsilon}$ to \eqref{radial_problem} is a critical point of the functional $\E_{\alpha,\beta}$, with non-negative second variation if $\lambda \leq 0$. These observations lead us naturally to the stability result stated in Theorem \ref{Main_Theorem_2}. Let us point out that, as is readily seen upon inspection of the upcoming proof, the conclusion \eqref{stability_conclusion} of Theorem \ref{Main_Theorem_2} actually holds for all values of $\lambda$ for which $\Psi_{\lambda,\Upsilon}$ is defined, although clearly this implies stability only for $\lambda \leq 0$ (as hinted at by the second variation of $\E_{\alpha,\beta}$).

\begin{proof}[Proof of Theorem \ref{Main_Theorem_2}]
The idea of this  proof is similar to \cite[pp.\ 104--109]{MarPul94}), but we present it nonetheless for the sake of completeness. We choose $\alpha$ and $\beta$ as in \eqref{alpha_beta}. For notational convenience, we will write $\E_{\alpha,\beta} = \E$ and $\psi^* = \Psi_{\lambda,\Upsilon}$ throughout the proof. Let $\psi$ be the stream function corresponding to $(u,v)$ (as in \eqref{stream_function}) solving \eqref{main_time_dependent}--\eqref{BC_polar}. It is immediate to check that
 \[\0 = \Delta\psi.\]
Using the identity
$\frac{1}{2}(a^2-b^2) = \frac{1}{2}(a-b)^2 + b(a-b)$, valid for all $a,b\in\R$, we see that
 \begin{align*}
 \E(\psi)-\E(\psi^*) &= -\frac{\lambda}{2}\iint_\C\bigg[(\psi_\theta-\psi^*_\theta)^2 + \frac{1}{\cos^2\theta}(\psi_\varphi-\psi^*_\varphi)^2\bigg]\dsi \\
 &\quad -\lambda\iint_\C\bigg[\psi^*_\theta(\psi_\theta-\psi^*_\theta) + \frac{1}{\cos^2\theta}\psi^*_\varphi(\psi_\varphi-\psi^*_\varphi)\bigg]\dsi \\
 &\quad + \frac{1}{2}\iint_\C(\Delta\psi - \Delta\psi^*)^2\dsi + \iint_\C(\Delta\psi^* + 2\omega\sin\theta - \Upsilon)(\Delta\psi-\Delta\psi^*)\dsi \\
 &\quad + \lambda\psi_2\cos\theta_2\int_0^{2\pi}(\psi_\theta-\psi_\theta^*)|_{\theta=\theta_1}\dphi -  \lambda\psi_1\cos\theta_1\int_0^{2\pi}(\psi_\theta-\psi_\theta^*)|_{\theta=\theta_2}\dphi\\
 &= -\frac{\lambda}{2}\|\mathbf{u} - \mathbf{u}^*_{\lambda,\Upsilon}\|^2_{L^2(\C)} + \frac{1}{2}\|\Delta\psi - \Delta\psi^*\|^2_{L^2(\C)},
 \end{align*}
in view of \eqref{stream_function}. Therefore, since $\E$ is a first integral for \eqref{Euler_spherical}--\eqref{BC_spherical},
 \begin{align*}
 -\lambda\|\mathbf{u}(t) - \mathbf{u}^*_{\lambda,\Upsilon}\|^2_{L^2(\C)} + \|\0(t)-\0^*_{\lambda,\Upsilon}\|^2_{L^2(\C)} &= 2(\E(\psi(t))-\E(\psi^*)) \\
 &= 2(\E(\psi_0) - \E(\psi^*)) \\
 &= -\lambda\|\mathbf{u}_0 - \mathbf{u}^*_{\lambda,\Upsilon}\|^2_{L^2(\C)} + \|\0_0-\0^*_{\lambda,\Upsilon}\|^2_{L^2(\C)},
 \end{align*}
as claimed.
\end{proof}

\begin{remark}
In particular, taking $\lambda = 0$, we have stability (albeit exclusively in terms of the vorticity) of the solution corresponding to the stream function $\Psi_{0,\Upsilon}$ which solves the problem
 \begin{subequations} \label{zonal_problem_simple}
 \begin{align}
 &\big(\Psi_{0,\Upsilon}'(\theta)\cos\theta\big)' = \Upsilon\cos\theta - \omega\sin2\theta, \quad \theta \in (\theta_1,\theta_2), \label{zonal_problem_simple_1}\\
 &\Psi_{0,\Upsilon}(\theta_1) = \psi_1, \ \Psi_{0,\Upsilon}(\theta_2) = \psi_2 \label{zonal_problem_simple_2}
 \end{align}
 \end{subequations}
(existence and uniqueness of $\Psi_{0,\Upsilon}$ are guaranteed by Proposition \ref{one-parameter_family}). It is interesting to note that, in this special case, one can find infinitely many Lyapunov functions. Indeed, for $n\in\mathbb{N}$, consider the functional
 \[\E_n(\psi) = \iint_\C \bigg[-\Upsilon\frac{n+1}{n}(\Delta\psi + 2\omega\sin\theta)^n + (\Delta\psi + 2\omega\sin\theta)^{n+1}\bigg]\dsi,\]
which is conserved along solutions of \eqref{main_time_dependent}--\eqref{BC_polar} by Proposition \ref{conserved_quantities}. A simple calculation yields
 \[\frac{{\rm d}}{{\rm d}s}\E_n(\Psi_{0,\Upsilon}+s\xi)\bigg|_{s=0} = 0 \quad \text{and} \quad \frac{{\rm d}^2}{{\rm d}s^2}\E_n(\Psi_{0,\Upsilon}+s\xi)\bigg|_{s=0} = (n+1)\Upsilon^{n-1}\iint_\C(\Delta\xi)^2\dsi,\]
thus $\Psi_{0,\Upsilon}$ is a critical point of $\E_n$ for each $n\in\mathbb{N}$ and the second variation is non-negative provided that $n$ be even and $\Upsilon\geq 0$, or $n$ odd and $\Upsilon$ of any sign. The solution of \eqref{zonal_problem_simple} can in fact be computed explicitly by integrating \eqref{zonal_problem_simple_1} twice; this gives
 \[\Psi_{0,\Upsilon}(\theta) = \zeta(\theta) + c_1\eta(\theta) + c_2,\]
where
 \begin{align*}
 & \zeta(\theta) = -\Upsilon\log(\cos\theta)+\omega\sin\theta-\frac{\omega}{2}\tanh^{-1}(\sin\theta), \\
 &\eta(\theta) = \log\bigg(\sin\frac{\theta}{2} + \cos\frac{\theta}{2}\bigg)-\log\bigg(\sin\frac{\theta}{2} - \cos\frac{\theta}{2}\bigg)
 \end{align*}
and the integration constants $c_1,c_2$ are determined through the boundary values \eqref{zonal_problem_simple_2} as
 \begin{align*}
 c_1 &= \frac{\psi_2 - \psi_1 + \zeta(\theta_1) - \zeta(\theta_2)}{\eta(\theta_2) - \eta(\theta_1)}, \\
 c_2 &= \frac{\psi_1\eta(\theta_2) - 2\psi_1\eta(\theta_1) + \psi_2\eta(\theta_1) - \zeta(\theta_1)\eta(\theta_2) + 2\eta(\theta_1)\zeta(\theta_1) - \zeta(\theta_2)\eta(\theta_1)}{\eta(\theta_2) - \eta(\theta_1)}.
 \end{align*}
\end{remark}

\section{Discussion and final remarks}

\begin{figure}[!b]
    \centering
    \includegraphics[width=0.5\linewidth]{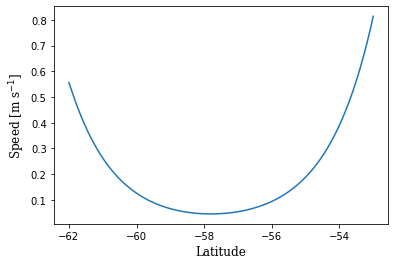}
    \caption{The (dimensional) zonal velocity corresponding to the (nondimensional) stream function $\Psi_{\lambda,\Upsilon}$ for the values $\lambda = -3000$, $\Upsilon = 30000$, $\psi_1 = -5$, and $\psi_2 = -25$.}
    \label{fig:velocity}
\end{figure}

The objective of this paper has been to discuss some qualitative features of a model for the Antarctic Circumpolar Current (ACC), including most notably stability properties of a class of zonal solutions. This latter fact may contribute to explaining why the current pattern observed in the ACC is persistent. To this end, it is interesting to inquire whether the large family of solutions $\Psi_{\lambda,\Upsilon}$ of \eqref{radial_problem} does indeed contain solutions (for some $\lambda \leq 0$) which exhibit at least some features that emerge from on-site measurement of the ACC. As mentioned in the Introduction, the ACC is in fact composed by fast and vertically coherent jets, with speeds which can exceed 1$\,{\rm m \, s}^{-1}$, separated by zones of low-speed flow. In particular, the ACC is delimited by two such jets, namely the inner Polar Front and the outer Subantarctic Front, which are situated at approximate latitudes of $60^\circ$S and $50^\circ$S. The presence of such jets at the boundary of the region, separated by a low-speed region in the middle, is indeed captured by $\Psi_{\lambda,\Upsilon}$ for appropriate choices of $\lambda$, $\psi_1$ and $\psi_2$; see Figure \ref{fig:velocity}.

\appendix

\section{Proof of existence and uniqueness of classical solutions} \label{appendix_classical_solutions}

In proving the global well-posedness result stated in Theorem \ref{global_well-posedness}, we will follow closely the classical paper \cite{Kat67}; since numerous proofs---especially of technical lemmas---carry over almost (if not entirely) word-for-word to our situation, we shall often omit them and instead simply refer the reader to that work. Nevertheless, some care is needed at  several places, where the difference between our equations \eqref{Euler_compact} and the ``standard" Euler equations in the plane (which would correspond to having $\alpha = 1$ and $\beta = 0$ in \eqref{Euler_compact_1}) will play a role; this will be the main focus of the subsequent discussion. The domain $D$, the initial condition $\U_0$, and $T>0$ are supposed to be fixed.

Some more notation will be needed throughout this section. If $j,k\in\mathbb{N}\cup\{0\}$ and $\varepsilon,\delta\in [0,1)$, the class $C^{j,k}(\overline{Q}_T)$ is the set of all (scalar- or vector-valued) functions whose derivatives of order $p$ in $\x$ and of order $q$ in $t$ exists and are continuous for all integers $0\leq p \leq j$ and $0\leq q \leq k$, whereas $C^{j+\varepsilon,k+\delta}(\overline{Q}_T)$ is the set of all (scalar- or vector-valued) functions whose derivatives of order $p$ in $\x$ and of order $q$ in $t$ exists and are H\"older continuous with exponent $\varepsilon$ in $\x$ (if $\varepsilon>0$) or with exponent $\delta$ in $t$ (if $\delta>0$), or both (if $\varepsilon>0$ and $\delta>0$) for all integers $0\leq p \leq j$ and $0\leq q \leq k$.

The Euclidean scalar product of two vectors ${\bf f},{\bf g}$ is denoted by ${\bf f}\cdot{\bf g}$; we denote by $\|{\bf f}\|_2 = \sqrt{{\bf f}\cdot{\bf f}}$ the Euclidean norm, as before. Moreover, we will write
 \[(\varphi,\psi) = \iint_D\varphi(x,y)\psi(x,y)\dx\dy \quad \text{and} \quad ({\bf f},{\bf g}) = \iint_D{\bf f}(x,y)\cdot{\bf g}(x,y)\dx\dy\]
for scalar functions $\varphi,\psi$ and vector functions ${\bf f},{\bf g}$.

A vector-valued function ${\bf f}$ with $\nabla\cdot{\bf f}=0$ will be referred to as a \emph{flow}. If, in addition, ${\bf f}\cdot\n = 0$ on $\p D$ (where $\n$ is the outward normal derivative on $\p D$), we will call ${\bf f}$ a \emph{tangential flow}.

Following the notation in \cite{Kat67}, we will denote by $L_1(z),L_2(z),\ldots$ monotone increasing functions of $z\geq 0$, which may depend only on $D$, $\U_0$, and $T$.

\subsection{Technical preparation}

We begin by stating and proving a lemma (which is a slight generalisation of Lemma 1.1 in \cite{Kat67}) that will be useful later on.

\begin{lemma} \label{lemma_identity}
Let $\U\in C^1(\overline{D})$ be a flow and set $\varphi = \nabla\times\U$; also, let $f\in C^1(\overline{D})$. Then
 \[(f\varphi,\U\cdot\nabla\Phi) + ((f\U\cdot\nabla)\U,\nabla^\perp\Phi) + \left(\frac{|\U|^2}{2}\nabla f,\nabla^\perp\Phi\right) = 0\]
for all $\Phi\in C^1(\overline{D})$ with $\Phi|_{\p D} = 0$.
\end{lemma}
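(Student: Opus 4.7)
The plan is to carry out a direct componentwise expansion of the integrands and then close with one integration by parts. Writing $\U = U\ee_x + V\ee_y$, the three integrands unfold as
$f\varphi\,(\U\cdot\nabla\Phi) = f(V_x-U_y)(U\Phi_x + V\Phi_y)$,
$((f\U\cdot\nabla)\U)\cdot\nabla^\perp\Phi = f[(UU_x+VU_y)\Phi_y - (UV_x+VV_y)\Phi_x]$, and
$(|\U|^2/2)\,\nabla f\cdot\nabla^\perp\Phi = \tfrac{|\U|^2}{2}(f_x\Phi_y - f_y\Phi_x)$. First I would add the contributions from the first two integrands: the cross terms $\pm fUV_x\Phi_x$ and $\mp fVU_y\Phi_y$ cancel exactly, and what remains is $f[(UU_x+VV_x)\Phi_y - (UU_y+VV_y)\Phi_x]$, which the product rule rewrites as $f\,\nabla(|\U|^2/2)\cdot\nabla^\perp\Phi$. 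Combined with the third integrand, the pointwise sum reduces to $\nabla(f|\U|^2/2)\cdot\nabla^\perp\Phi$. (I note in passing that the assumption $\nabla\cdot\U = 0$ plays no role in this algebraic step.)

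Setting $g = f|\U|^2/2$, I would then exploit that $\nabla^\perp\Phi$ is automatically divergence-free, so $\nabla g\cdot\nabla^\perp\Phi = \nabla\cdot(g\,\nabla^\perp\Phi)$. The divergence theorem reduces the sum of the three inner products to the boundary integral $\int_{\p D} g\,(\nabla^\perp\Phi\cdot\n)\,dS$. Since $\Phi$ vanishes on $\p D$, its tangential derivative $\p_\tau\Phi$ vanishes there as well, so $\nabla\Phi$ is parallel to $\n$; equivalently, $\nabla^\perp\Phi$ is tangent to $\p D$, whence $\nabla^\perp\Phi\cdot\n = 0$. The boundary term therefore drops out and the claimed identity follows.

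There is no serious obstacle: the argument reduces to careful bookkeeping of six scalar terms, followed by a single application of the divergence theorem. The only substantive point, and the place where the hypothesis $\Phi|_{\p D} = 0$ is used, is the identity $\nabla^\perp\Phi\cdot\n = \p_\tau\Phi = 0$ on $\p D$, which here plays the role that the tangential-flow condition $\U\cdot\n = 0$ plays in the classical version of this identity in \cite{Kat67}.
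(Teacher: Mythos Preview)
Your componentwise reduction of the integrand to $\nabla(f|\U|^2/2)\cdot\nabla^\perp\Phi$ is correct and is a genuinely cleaner route than the paper's. The paper instead writes down a more elaborate pointwise identity (expressing the first two integrands as $\nabla\cdot(f\varphi\Phi\U) - \nabla\times((\Phi f\U\cdot\nabla)\U)$ plus lower-order terms in $\nabla f$), which forces the temporary assumption $\U\in C^2$ so that $\varphi=\nabla\times\U\in C^1$ and these expressions make sense; the $C^1$ case is then recovered by mollifying $\U$. Your algebra avoids differentiating $\varphi$ altogether and, as you note, never uses $\nabla\cdot\U=0$, so it is both shorter and slightly sharper.

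There is one regularity point you gloss over. You invoke $\nabla\cdot\nabla^\perp\Phi=0$ to write $\nabla g\cdot\nabla^\perp\Phi=\nabla\cdot(g\,\nabla^\perp\Phi)$ and then apply the divergence theorem. But the lemma only assumes $\Phi\in C^1(\overline{D})$, so $\nabla^\perp\Phi$ is merely continuous: the pointwise identity $\Phi_{xy}=\Phi_{yx}$ is not available, and $g\,\nabla^\perp\Phi$ is not $C^1$, so the classical divergence theorem does not apply as stated. The fix is the same kind of approximation the paper uses, just on the other factor: approximate $\Phi$ in $C^1(\overline{D})$ by smooth functions vanishing on $\partial D$, apply your argument to each approximant, and pass to the limit (all three inner products are continuous in $\Phi$ with respect to the $C^1$ norm). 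With that one sentence added, your proof is complete.
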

\begin{proof}
If $\U\in C^2(\overline{D})$, the claim is a consequence of the easily verified identity
 \begin{align*}
 f\varphi\U\cdot\nabla\Phi + ((f\U\cdot\nabla)\U)\cdot\nabla^\perp\Phi &= \nabla\cdot(f\varphi\Phi\U) - \nabla\times((\Phi f\U\cdot\nabla)\U) \\
 &\quad - \varphi\Phi\U\cdot\nabla f - ((\Phi\U\cdot\nabla)\U)\cdot\nabla^\perp f,
 \end{align*}
together with the observation that
 \begin{align*}
 (\varphi\Phi\U,\nabla f) + ((\Phi\U\cdot\nabla)\U),\nabla^\perp f) &= \left(\nabla\bigg(\Phi\frac{|\U^2|}{2}\bigg) - \frac{|\U|^2}{2}\nabla\Phi,\nabla^\perp f\right) \\
 &= \left(\frac{|\U|^2}{2}\nabla f,\nabla^\perp\Phi\right).
 \end{align*}
The result for $\U\in C^1(\overline{D})$ follows from here by approximating $\U$ by means of the usual mollifiers (analogously to \cite[Lemma 1.1]{Kat67}).
\end{proof}

We may eliminate the pressure $P$ from \eqref{Euler_compact_1}--\eqref{Euler_compact_2}, to formally obtain the vorticity equation
 \begin{equation}\label{vorticity_equation}
 \xi_t + \U\cdot\nabla(\alpha\xi + \beta) = \xi_t + \nabla\cdot[(\alpha\xi + \beta)\U] = 0,
 \end{equation}
where
 \[\xi = \nabla\times\U.\]
It is in fact convenient to multiply \eqref{vorticity_equation} by $\alpha$ and use the time-independence of $\alpha$ and $\beta$, to rewrite \eqref{vorticity_equation} as
 \begin{equation} \label{vorticity_equation_2}
 \zeta_t + (\alpha\U)\cdot\nabla\zeta = 0,
 \end{equation}
where
 \[\zeta \coloneqq \alpha\xi + \beta.\]

We will denote by $g : \R^2\times\R^2 \supseteq D\times D\to\R$ the Green function of the domain $D$. The integral operator with kernel $g$ is denoted by $G$:
 \[G\varphi(\x) = \iint_D g(\x,\tilde{\x})\varphi(\tilde{\x})\,{\rm d}\tilde{\x}, \quad {\bf x} \in D.\]
Then, as is known, $\psi = G\varphi$ is the solution of the Poisson boundary value problem $\Delta\psi = -\varphi$, $\psi|_{\p D} = 0$. Let us recall some useful estimates and regularity properties for $g$ and $G$.

\begin{lemma} \label{lemma_regularity}
 \begin{itemize}
  \item[(i)] Let $\varepsilon,\delta\in (0,1)$. The map $C^{0+\delta}(\overline{D})\to C^{1+\delta}(\overline{D})$, $\varphi\mapsto \nabla^\perp G\varphi$ is continuous. If $\varphi\in C^{0+\delta,0}(\overline{Q}_T)$, then $\nabla^\perp G\varphi\in C^{1+\delta',0}(\overline{Q}_T)$ for all $\delta'\in (0,\delta)$. If $\varphi\in C^{0+\delta,0+\varepsilon}(\overline{Q}_T)$, then $\nabla^\perp G\varphi\in C^{1+\delta',\varepsilon'}(\overline{Q}_T)$ for all $\delta'\in (0,\delta)$ and $\varepsilon'\in (0,\varepsilon)$.
  \item[(ii)] If $\varphi\in L^\infty(D)$, then $G\varphi\in C^1(\overline{D})$ and $\U = \nabla^\perp G\varphi$ is a tangential flow with
   \[\|\U\|_\infty \leq C\|\varphi\|_\infty,\]
  where $C>0$ depends only on $D$.
 \end{itemize}
\end{lemma}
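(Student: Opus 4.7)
The strategy is to combine the standard decomposition of the Dirichlet Green function on the smooth planar domain $D$ into a logarithmic singular part and a smooth correction,
\[
g(\x,\tilde\x) = -\frac{1}{2\pi}\log|\x-\tilde\x| + h(\x,\tilde\x),
\]
with classical Schauder theory and a short interpolation argument in time. The arguments are essentially transcriptions of those in \cite{Kat67} and the standard references on potential theory; no new idea is required.

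For part (ii), split $G\varphi = G_0\varphi + G_1\varphi$ according to the above decomposition. Smoothness of $h$ gives $G_1\varphi \in C^1(\overline D)$ with $\|\nabla G_1\varphi\|_\infty \leq C\|\varphi\|_\infty$ for every $\varphi \in L^\infty(D)$. The singular piece can be differentiated under the integral sign because $\nabla_\x\log|\x-\tilde\x| = (\x-\tilde\x)/|\x-\tilde\x|^2$ is locally integrable in two dimensions; dominated convergence then gives continuity of $\nabla G_0\varphi$ in $\x$, and the uniform bound $\sup_{\x\in\overline D}\iint_D|\x-\tilde\x|^{-1}\,{\rm d}\tilde\x < \infty$ yields $\|\nabla G_0\varphi\|_\infty \leq C'\|\varphi\|_\infty$. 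Since $G\varphi$ vanishes on $\p D$, its gradient is parallel to $\n$ there, so $\U = \nabla^\perp G\varphi$ is tangential; and $\nabla\cdot\U = \nabla\cdot\nabla^\perp G\varphi \equiv 0$ on $D$, so $\U$ is a flow.

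For part (i), we invoke the classical Schauder estimate for the Dirichlet problem on the smooth domain $D$ (cf.\ \cite{GilTru01}): $\psi = G\varphi$ satisfies $\|\psi\|_{C^{2+\delta}(\overline D)} \leq C\|\varphi\|_{C^{0+\delta}(\overline D)}$, so that $\varphi \mapsto \nabla^\perp G\varphi$ is continuous from $C^{0+\delta}(\overline D)$ into $C^{1+\delta}(\overline D)$. For the time-dependent assertions, applying this pointwise in $t \in [0,T]$ yields a uniform-in-$t$ bound of $\nabla^\perp G\varphi(\cdot,t)$ in $C^{1+\delta}(\overline D)$. Joint continuity (respectively H\"older continuity) in $t$ with the slightly reduced exponents then follows from the standard interpolation inequality
\[
\|f\|_{C^{1+\delta'}(\overline D)} \leq C\,\|f\|_{C^{1+\delta}(\overline D)}^{\kappa}\|f\|_{C^0(\overline D)}^{1-\kappa}, \qquad \kappa = \tfrac{1+\delta'}{1+\delta},
\]
applied to the differences $\nabla^\perp G\varphi(\cdot,t) - \nabla^\perp G\varphi(\cdot,s) = \nabla^\perp G(\varphi(\cdot,t) - \varphi(\cdot,s))$, with the $C^0$-bound on the right-hand side supplied by part (ii) and linearity of $G$. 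The loss of exponent ($\delta \mapsto \delta'$, $\varepsilon \mapsto \varepsilon'$) is intrinsic to this interpolation approach and is precisely why the statement is formulated with strict inequalities; this is the only (very mild) subtlety.
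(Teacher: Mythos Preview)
The paper does not actually prove this lemma; it is stated without proof as a recollection of standard facts about the Dirichlet Green function and Schauder theory, in keeping with the paper's explicit policy (stated at the opening of Appendix~\ref{appendix_classical_solutions}) of omitting proofs of technical lemmas that carry over from \cite{Kat67}. Your sketch is precisely the classical argument one finds behind such statements---Green-function decomposition plus potential-theoretic bounds for (ii), Schauder estimates plus H\"older interpolation in time for (i)---and is correct in substance; there is nothing to compare against.
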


The first step will be to show that, for all scalar functions $\varphi = \varphi(\x,t)$ in a certain class $S'$, we can construct a unique globally-in-time tangential flow $\U = U\ee_x + V\ee_y$ such that
 \begin{align}
 & \nabla\times\U = \varphi, \label{flow_1} \\
 & \p_t(\U,\U^*) + \left((\alpha\U\cdot\nabla)\U + \frac{|\U|^2}{2}\nabla\alpha + \beta J\U,\U^*\right) = 0, \label{flow_2} \\
 & (\U|_{t=0} - \U_0, \U^*) = 0, \label{flow_3}
 \end{align}
where $\U^*\in C^1(\overline{D})$ is defined as
 \begin{equation} \label{U*}
 \U^* = \frac{\nabla^\perp\psi^*}{(\nabla^\perp\psi^*,\nabla^\perp\psi^*)},
 \end{equation}
$\psi^*\in C^2(\overline{D})$ being the unique harmonic function with $\psi^*|_{\Gamma_1} = 1$ and $\psi^*|_{\Gamma_2} = 0$ (with $\Gamma_1 = \{(x,y) : x^2+y^2 = r_1^2\}$ and $\Gamma_2 = \{(x,y) : x^2+y^2 = r_2^2\}$).

The next technical auxiliary result, which is going to be needed in the sequel, is \cite[Lemma 1.6]{Kat67}.

\begin{lemma} \label{lemma_pressure}
Let ${\bf f}\in C(\overline{D})$ be a vector-valued function such that
 \[({\bf f},\U^*) = 0 \quad \text{and} \quad ({\bf f},\nabla^\perp\Phi) = 0\]
for all $\Phi\in C^\infty_{\rm c}(D)$ (where $\U^*$ is as in \eqref{U*}). Then there exists a function $p\in C^1(\overline{D})$ such that ${\bf f} = \nabla p$. If ${\bf f}\in C(\overline{Q}_T)$ and the above conditions are satisfied for each $t\in[0,T]$, then $p$ can be chosen from $C^{1,0}(\overline{Q}_T)$.
\end{lemma}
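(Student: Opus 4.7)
The plan is to read the two hypotheses as (i) distributional curl-freeness of ${\bf f}$ and (ii) vanishing of the single cohomological obstruction attached to the doubly-connected geometry of $D$, and then to reconstruct $p$ via a line integral. Integrating by parts formally, $({\bf f},\nabla^\perp\Phi) = 0$ for all $\Phi\in C^\infty_{\rm c}(D)$ is exactly $\nabla\times{\bf f} = 0$ in $\mathcal{D}'(D)$, while $({\bf f},\U^*) = 0$ is designed to kill the residual one-dimensional space of harmonic tangential flows that is the obstruction to global exactness on the annulus.

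The first step is to make sense of the circulation $\kappa(r) := \oint_{\gamma_r}{\bf f}\cdot d{\bf r}$ along the concentric circle $\gamma_r = \{x^2+y^2 = r^2\}$ and to show that it is independent of $r\in(r_1,r_2)$. Since ${\bf f}$ is only continuous, I would regularise by mollification: ${\bf f}_\varepsilon := {\bf f}*\eta_\varepsilon$ lies in $C^\infty(D_\varepsilon)$ and is classically curl-free there, so an application of Green's theorem on the annular region bounded by $\gamma_r$ and $\gamma_{r'}$ gives $\kappa_\varepsilon(r) = \kappa_\varepsilon(r')$ for $r,r'\in(r_1+\varepsilon,r_2-\varepsilon)$. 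Uniform convergence of ${\bf f}_\varepsilon$ to ${\bf f}$ on each compact circle lets me pass to $\varepsilon\to 0$ and conclude that $\kappa(r)$ takes a common value $\kappa$ throughout $(r_1,r_2)$.

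The second step uses the remaining hypothesis to force $\kappa = 0$. Solving $\Delta\psi^* = 0$ radially with $\psi^*|_{\Gamma_1}=1$ and $\psi^*|_{\Gamma_2}=0$ yields $\psi^*(r) = \log(r_2/r)/\log(r_2/r_1)$, and a short computation gives $\U^* = (2\pi r)^{-1}\ee_\theta$. Writing ${\bf f} = f_r\ee_r + f_\theta\ee_\theta$, using $\kappa = r\int_0^{2\pi}f_\theta(r,\theta)\dtheta$, and switching the order of integration,
\[0 = ({\bf f},\U^*) = \frac{1}{2\pi}\int_{r_1}^{r_2}\frac{\kappa}{r}\dr = \frac{\kappa\log(r_2/r_1)}{2\pi},\]
hence $\kappa = 0$. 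The same mollification trick applied on a tubular neighbourhood of an arbitrary closed piecewise-$C^1$ loop of winding number $n$ then shows that the line integral of ${\bf f}$ along every such loop equals $n\kappa = 0$.

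Finally, fixing $\x_0\in D$ and setting $p(\x) := \int_{\x_0}^{\x}{\bf f}\cdot d{\bf r}$ along any piecewise-$C^1$ path yields a well-defined function by the path-independence just established, and an axis-parallel difference-quotient argument gives $\nabla p = {\bf f}$ on $D$; since ${\bf f}\in C(\overline{D})$, this forces $p\in C^1(\overline{D})$. For the time-dependent statement I perform the same construction at each $t\in[0,T]$ with a single base point $\x_0$; joint continuity of ${\bf f}$ on $\overline{Q}_T$ together with compactness of $\overline{D}$ yields $p\in C^{1,0}(\overline{Q}_T)$. I expect the main technical subtlety to be the first step, as assigning a classical meaning to the circulation around $\gamma_r$ for a merely continuous ${\bf f}$ is what the mollification-and-uniform-convergence argument is really needed for; once that is in place, everything else is essentially textbook.
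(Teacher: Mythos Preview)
The paper does not prove this lemma; it merely quotes it as \cite[Lemma~1.6]{Kat67}. Your argument is correct and provides a self-contained proof. The two hypotheses are read exactly as they should be: the condition on $\nabla^\perp\Phi$ gives distributional curl-freeness, and the condition on $\U^*$ kills the single period obstruction on the annulus. Your explicit computation $\psi^*(r)=\log(r_2/r)/\log(r_2/r_1)$ and $\U^*=(2\pi r)^{-1}\ee_\varphi$ (in the paper's notation for the angular unit vector) is right, and the resulting identity $({\bf f},\U^*)=\frac{\kappa}{2\pi}\log(r_2/r_1)$ cleanly forces $\kappa=0$ once you know $\kappa(r)$ is constant. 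The mollification step is indeed the technical crux, and you handle it correctly: ${\bf f}_\varepsilon={\bf f}*\eta_\varepsilon$ is smooth and classically curl-free on $D_\varepsilon$, so Green's theorem applies, and uniform convergence on compacta lets you pass to the limit both for concentric circles and for arbitrary loops. The line-integral construction of $p$ and the time-dependent extension are then routine; your remark that fixing a single base point $\x_0$ and invoking joint continuity of ${\bf f}$ on the compact set $\overline{Q}_T$ suffices for $p\in C^{1,0}(\overline{Q}_T)$ is correct.
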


We now define the class
 \[S' = \bigcup_{\varepsilon\in (0,1)}C^{\varepsilon,0}(\overline{Q}_T).\]

\begin{lemma} \label{lemma_flow_construction}
For all $\varphi\in S'$ there exists a unique $\U\in C^{1,0}(\overline{Q}_T)$ satisfying \eqref{flow_1}--\eqref{flow_3}; $\U$ is a tangential flow and can be written as $\U = \overline{\U} + \U'$, with
 \begin{equation} \label{U}
 \overline{\U} = \nabla^\perp G\varphi \quad \text{and} \quad \U' = \lambda(t)\U^*,
 \end{equation}
where $|\lambda(t)| \leq L_1(\|\varphi\|_\infty)$.
\end{lemma}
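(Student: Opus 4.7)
My plan is to reduce the construction of $\U$ to the determination of a single scalar function $\lambda(t)$, exploiting the topology of the doubly-connected annulus $D$. Setting $\overline{\U}(t)=\nabla^\perp G\varphi(t)$, Lemma \ref{lemma_regularity} gives $\overline{\U}\in C^{1+\delta,0}(\overline{Q}_T)$ for any $\delta\in(0,\varepsilon)$ (where $\varphi\in C^{\varepsilon,0}(\overline{Q}_T)$), and $\overline{\U}$ is a tangential flow with $\nabla\times\overline{\U}=\varphi$. Any other tangential flow $\U$ with the same curl must differ from $\overline{\U}$ by a tangential, divergence-free, curl-free vector field; on the doubly-connected planar domain $D$ such fields form a one-dimensional space spanned by $\nabla^\perp\psi^*$ (equivalently by $\U^*$). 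Hence necessarily $\U(t)=\overline{\U}(t)+\lambda(t)\U^*$, and the regularity $\U\in C^{1,0}(\overline{Q}_T)$ will follow as soon as $\lambda\in C[0,T]$ is constructed.

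The initial condition \eqref{flow_3} pins down $\lambda(0)=(\U_0-\overline{\U}(0),\U^*)/(\U^*,\U^*)$ uniquely. For the evolution I would not try to differentiate \eqref{flow_2} in $t$ (as $\overline{\U}_t$ need not exist for $\varphi\in S'$) but instead integrate it in time, obtaining the Volterra equation
\[(\U(t),\U^*) = (\U_0,\U^*) - \int_0^t\left((\alpha\U\cdot\nabla)\U + \tfrac{|\U|^2}{2}\nabla\alpha + \beta J\U,\U^*\right)\ds.\]
Substituting $\U=\overline{\U}+\lambda\U^*$, the integrand becomes a polynomial of degree at most two in $\lambda$ whose coefficients depend continuously on $s$. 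The decisive observation is that the coefficient of $\lambda^2$ vanishes: by the pointwise identity $(\U^*\cdot\nabla)\U^*\cdot\U^* = \U^*\cdot\nabla(|\U^*|^2/2)$, this coefficient equals $\iint_D\U^*\cdot\nabla(\alpha|\U^*|^2/2)\dx\dy$, which is zero by the divergence theorem together with $\nabla\cdot\U^* = -\Delta\psi^*/c = 0$ (harmonicity of $\psi^*$, with $c=(\nabla^\perp\psi^*,\nabla^\perp\psi^*)$) and $\U^*\cdot\n=0$ on $\p D$.

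With the quadratic term eliminated, the Volterra equation is linear in $\lambda$, and it remains to control its coefficients in terms of $\|\varphi\|_\infty$ alone. The potentially troublesome contributions involve $\nabla\overline{\U}$ (for example $((\alpha\U^*\cdot\nabla)\overline{\U},\U^*)$), which a priori would require H\"older norms of $\varphi$; they can be dispatched by integration by parts, using $\nabla\cdot\U^* = 0$ and $\U^*\cdot\n = 0$ to shift the derivatives from $\overline{\U}$ onto the smooth fixed field $\U^*$. The resulting expressions involve only $\overline{\U}$ itself, and Lemma \ref{lemma_regularity}(ii) then yields $\|\overline{\U}\|_\infty\le C\|\varphi\|_\infty$. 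Global-in-time existence and uniqueness of the continuous function $\lambda$ on $[0,T]$ follow from the standard theory of linear Volterra integral equations, and Gr\"onwall's inequality (or the explicit solution formula) produces the required bound $|\lambda(t)|\le L_1(\|\varphi\|_\infty)$. The main obstacle, I believe, is the cancellation of the $\lambda^2$ coefficient: without it the equation for $\lambda$ would be genuinely nonlinear and its solution could conceivably blow up inside $[0,T]$, which would contradict the global statement of the lemma.
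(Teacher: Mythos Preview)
Your argument is correct, and it is in fact cleaner than the paper's. Both proofs use the same decomposition $\U=\overline{\U}+\lambda(t)\U^*$ and the same underlying cancellation, but they exploit it at different levels. The paper writes down an ODE for $\lambda$ that it treats as quadratic (of the form $\lambda'+\mu_1\lambda^2+\mu_2\lambda+\mu_3=0$), obtains only local existence, and then multiplies the equation by $\lambda$ to derive an energy-type equation for $\lambda^2$; at that stage it observes (in a footnote) that the $\lambda^3$ coefficient cancels exactly, leaving a linear differential inequality for $\lambda^2$ to which Gronwall applies. You identify the same cancellation one step earlier, directly in the original equation: the coefficient of $\lambda^2$ in $((\alpha\U\cdot\nabla)\U+\tfrac{|\U|^2}{2}\nabla\alpha+\beta J\U,\U^*)$ is $\iint_D \U^*\cdot\nabla(\alpha|\U^*|^2/2)\,{\rm d}x\,{\rm d}y=0$, which is precisely the computation behind the paper's footnote. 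This makes the equation for $\lambda$ linear from the outset and renders the energy detour unnecessary. Your integration-by-parts step to eliminate $\nabla\overline{\U}$ from the coefficients is also correct and matches the paper's manipulation of $((\alpha\U\cdot\nabla)\overline{\U},\U')$. One minor remark: your caution about $\overline{\U}_t$ is not strictly needed, since $(\overline{\U},\U^*)=0$ for all $t$ means $\p_t(\U,\U^*)=\lambda'(t)(\U^*,\U^*)$ is well defined regardless; the paper simply writes the ODE directly. Still, the Volterra formulation is equivalent and does no harm.
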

\begin{proof}
Since $\varphi\in C^{\varepsilon,0}(\overline{D})$ for some $\varepsilon>0$, by Lemma \ref{lemma_regularity} we have $G\varphi\in C^{2,0}(\overline{Q}_T)$ and $\overline{\U} = \nabla^\perp G\varphi \in C^{1,0}(\overline{Q}_T)$. Clearly, $\nabla\times\overline{\U} = -\Delta G\varphi = \varphi$ and $\nabla\times\U^* = 0$, thus \eqref{flow_1} is satisfied by $\U$ as in \eqref{U}, for any choice of $\lambda(t)$. We now determine $\lambda(t)$ so that \eqref{flow_2} and \eqref{flow_3} are satisfied as well. Note that $(\overline{\U},\U^*) = 0$, as is seen by integrating by parts and by $\nabla\times\U^* = 0$ and $G\varphi|_{\p D} = 0$. Plugging $\U = \overline{\U} + \U'$ into \eqref{flow_2} and \eqref{flow_3} yields for $\lambda(t)$ the ordinary differential equation
 \[\lambda'(t) + \mu_1(t)\lambda(t)^2 + \mu_2\lambda(t)^2 + \mu_3(t) = 0,\]
with initial condition
 \[\lambda(0) = (\U_0,\U^*);\]
here, $\mu_2$ is a real constant, whereas $\mu_1(t)$ and $\mu_3(t)$ are continuous functions of $t$. Standard ODE theory yields the existence of a \emph{local} solution $\lambda(t)$. Next we show that this solution can be extended to the whole interval $[0,T]$. To this end, let $[0,\tau)$ denote the maximal interval of existence, for an appropriate $\tau>0$. We multiply \eqref{flow_2} by $\lambda(t),$ so that
 \[(\p_t\U,\U') + \left((\alpha\U\cdot\nabla)\U + \frac{|\U|^2}{2}\nabla\alpha + \beta J\U,\U'\right) = 0.\]
Plugging in $\U = \overline{\U} + \U'$, and using again $(\overline{\U},\U^*) = 0$ and the identities
 \begin{align*}
 ((\alpha\U\cdot\nabla)\U',\U') &= -\frac{1}{2}(\U\cdot\nabla\alpha,|\U'|^2), \\
 ((\alpha\U\cdot\nabla)\overline{\U},\U') &= -((\alpha\U\cdot\nabla)\U',\overline{\U}) - (\U\cdot\nabla\alpha,\U'\cdot\overline{\U}),
 \end{align*}
we obtain the equation
 \[\frac{1}{2}\frac{{\rm d}}{{\rm d}t}[\lambda(t)^2] + \gamma_1(t)\lambda(t) + \gamma_2(t)\lambda(t)^2 = 0,\]
where the coefficients\footnote{The terms which would make up the coefficient of $\lambda(t)^3$ cancel exactly.}
 \[\gamma_1(t) = -((\alpha\overline{\U}\cdot\nabla)\U^*,\overline{\U}) - (\overline{\U}\cdot\nabla\alpha,\U^*\cdot\overline{\U}) + \frac{1}{2}(|\overline{\U}|^2\nabla\alpha,\U^*) + (\beta J\overline{\U},\U^*)\]
and
 \begin{align*}
 \gamma_2(t) &= -((\alpha\U^*\cdot\nabla)\U^*,\overline{\U}) - (\U^*\cdot\nabla\alpha,\U^*\cdot\overline{\U}) - \frac{1}{2}(\overline{\U}\cdot\nabla\alpha,|\U^*|^2) \\
 &\quad + ((\overline{\U}\cdot\U^*)\nabla\alpha,\U^*) + (\beta J\U^*,\U^*)
 \end{align*}
depend on $\overline{\U}$, but not on its derivatives, and---in view of Lemma \ref{lemma_regularity}---are therefore readily estimated in terms of the norm $\|\varphi\|_\infty$:
 \[|\gamma_1(t)| \leq L_2(\|\varphi\|_\infty), \quad |\gamma_2(t)| \leq L_3(\|\varphi\|_\infty).\]
Hence, with $C(\|\varphi\|_\infty) = \max\{L_2(\|\varphi\|_\infty),L_3(\|\varphi\|_\infty)\}$,
 \begin{align*}
 \lambda(t)^2 &= \lambda(0)^2 - 2\int_0^t\big[\gamma_1(s)\lambda(s) + \gamma_2(s)\lambda(s)^2\big]\ds \\
 &\leq \lambda(0)^2 + 2C(\|\varphi\|_\infty)\int_0^t\big[|\lambda(s)| + |\lambda(s)|^2\big]\ds \\
 &\leq \lambda(0)^2 + C(\|\varphi\|_\infty)t + 3C(\|\varphi\|_\infty)\int_0^t\lambda(s)^2\ds,
 \end{align*}
and Gronwall's inequality now implies
 \[\lambda(t)^2 \leq (\lambda(0)^2 + C(\|\varphi\|_\infty)\tau)\e^{3C(\|\varphi\|_\infty)\tau} \quad \text{for all } t\in [0,\tau).\]
Therefore $\lambda(t)$ is uniformly bounded on the interval $[0,\tau)$, for any $\tau\in[0,T]$, and so can be extended to the whole of $[0,T]$. Setting
 \[L_1(\|\varphi\|_\infty) = [(\U_0,\U^*)^2 + C(\|\varphi\|_\infty)T]^{1/2}\e^{3C(\|\varphi\|_\infty)T/2}\]
finishes the proof.
\end{proof}

Next, given $\varphi\in S'$ and $\U$ defined according to Lemma \ref{lemma_flow_construction}, we turn to solving the initial value problem for the transport equation
 \begin{align} \label{transport_equation}
 \begin{split}
 & \zeta_t + (\alpha\U)\cdot\nabla\zeta = 0, \\
 & \zeta|_{t=0} = \zeta_0,
 \end{split}
 \end{align}
where $\zeta_0 = \alpha\nabla\times\U_0 + \beta$. This is solved via a straightforward application of the method of characteristics. Given $t\in[0,T]$ and $\x = (x,y) \in D$, we will denote by
\[s\mapsto {\bf X}^t(\x;s) = (X^t(x,y;s),Y^t(x,y;s))\]
the characteristic line for which ${\bf X}^t(\x;t) = \x$. In other words, ${\bf X}^t(\x;\cdot)$ solves the ``initial" value problem
 \begin{align} \label{characteristics}
 \begin{split}
 & \frac{{\rm d}}{{\rm d}s}{\bf X}^t(\x;s) = \alpha({\bf X}^t(\x;s))\U({\bf X}^t(\x;s),s), \\
 & {\bf X}^t(\x;t) = \x.
 \end{split}
 \end{align}
Since $\alpha\U\in C^{1,0}(Q_T)$, existence of a local unique solution is guaranteed by standard theory. Due to the fact that $\alpha\U$ has zero normal component on $\p D$, the solution can be extended to the whole interval $s\in[0,T]$, and has useful regularity properties:

\begin{lemma}
Given $t\in[0,T]$ and $\x\in D$, the problem \eqref{characteristics} has a unique solution ${\bf X}^t(\x;s)$ defined for all $s\in[0,T]$; this solution is continuously differentiable in all three arguments. For fixed $t$ and $s$, ${\bf X}^t(\cdot;s)$ is a one-to-one, measure preserving map of $\overline{D}$ onto itself, with its Jacobian determinant equal to $1$, where $\p D$ is mapped onto itself. ${\bf X}^s(\cdot;s)$ is the identity map, and ${\bf X}^s(\cdot;t)$ is the inverse map of ${\bf X}^t(\cdot;s)$.
\end{lemma}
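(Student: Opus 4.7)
The driving vector field $V(\x,s) = \alpha(\x)\U(\x,s)$ is of class $C^{1,0}(\overline{Q}_T)$, since $\alpha\in C^\infty(\overline{D})$ and $\U\in C^{1,0}(\overline{Q}_T)$ by Lemma \ref{lemma_flow_construction}. In particular $V$ is Lipschitz continuous in $\x$ uniformly in $s\in[0,T]$ and bounded on the compact set $\overline{D}$, so Picard--Lindel\"of yields a unique local $C^1$ solution of \eqref{characteristics} on a neighborhood of $s=t$. To extend the solution to the whole interval $[0,T]$, I would exploit the tangency condition $V\cdot\n = 0$ on $\p D$ (inherited from $\U\cdot\n = 0$): uniqueness implies that trajectories starting on either boundary circle $\Gamma_i$ remain on $\Gamma_i$, whereas trajectories starting in $D$ cannot cross $\p D$ in finite time without violating uniqueness against such a boundary trajectory. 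Since $V$ is bounded on $\overline{Q}_T$ and $\overline{D}$ is compact, no blow-up occurs, and the maximal interval of existence coincides with $[0,T]$.

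The joint continuous differentiability of $(t,\x,s)\mapsto {\bf X}^t(\x;s)$ then follows from the classical smooth-dependence-on-data theorem for ODEs with $C^1$ right-hand sides. The relation ${\bf X}^s(\cdot;s) = \operatorname{id}$ is immediate from the ``initial'' condition in \eqref{characteristics}. For the inverse property, I would fix $s,t\in[0,T]$ and $\x\in\overline{D}$ and observe that the two curves $\sigma\mapsto {\bf X}^t(\x;\sigma)$ and $\sigma\mapsto {\bf X}^s({\bf X}^t(\x;s);\sigma)$ both satisfy \eqref{characteristics} and agree at $\sigma = s$ by construction; by uniqueness they coincide for all $\sigma$, and evaluating at $\sigma = t$ gives ${\bf X}^s({\bf X}^t(\x;s);t) = \x$. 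This yields bijectivity and ${\bf X}^s(\cdot;t) = ({\bf X}^t(\cdot;s))^{-1}$; combined with the first paragraph, $\p D$ is mapped onto itself.

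The delicate point is the measure-preservation/Jacobian claim. Liouville's formula applied to \eqref{characteristics} gives
\[\det D_\x{\bf X}^t(\x;s) = \exp\bigg(\int_t^s (\nabla\cdot V)({\bf X}^t(\x;\tau),\tau)\dtau\bigg),\]
and a direct computation using $\nabla\cdot\U = 0$ shows $\nabla\cdot V = \U\cdot\nabla\alpha$, which is generically nonzero. Hence the Euclidean Jacobian is positive and uniformly bounded above and below (which is enough to secure the diffeomorphism property) but is not identically $1$. The natural invariant measure of the flow is instead the weighted measure $\alpha^{-1}\dx\dy$, since $\nabla\cdot(\alpha^{-1}V) = \nabla\cdot\U = 0$; this is, up to a constant, the pullback of the spherical area element $\dsi$ under the stereographic projection. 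The main obstacle is therefore conceptual rather than technical: I would interpret the measure-preservation assertion with respect to this weighted measure, or equivalently return to spherical coordinates, where the original incompressibility \eqref{incompressible} directly delivers unit Jacobian for the characteristic flow on the sphere.
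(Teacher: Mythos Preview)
Your arguments for local existence, global extension via the tangency condition, smooth dependence, and the inverse-map property are precisely the standard ones, and they coincide with what the paper intends: its proof simply cites Lemmas~2.2 and~2.3 of \cite{Kat67}, which proceed along exactly these lines for a $C^{1,0}$ tangential field.

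Your final paragraph goes beyond what the paper records, and the observation is sharp. In Kato's original setting the driving field is divergence-free, so Liouville's formula yields Jacobian $\equiv 1$; here the field is $\alpha\U$, with $\nabla\cdot(\alpha\U)=\U\cdot\nabla\alpha$, which does not vanish in general. You are right that the flow preserves the weighted measure $\alpha^{-1}\dx\dy$ (the pullback of the spherical area element under the stereographic projection) rather than Lebesgue measure, so the Jacobian-equal-to-$1$ clause is an imprecision carried over verbatim from Kato's phrasing. The paper does not address this point; your suggested reading---invariance of the weighted measure, or equivalently unit Jacobian on the sphere---is the correct fix.
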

\begin{proof}
Due to the fact that $\alpha\U \in C^{1,0}(Q_T)$ is tangential, the proof is word-for-word identical to those of \cite[Lemma 2.2 \& Lemma 2.3]{Kat67}.
\end{proof}

Given the characteristic lines ${\bf X}^t(\x;s)$, the solution of \eqref{transport_equation} at $(x,y,t) \in Q_T$ is then simply given by
 \begin{equation} \label{zeta}
 \zeta(x,y,t) = \zeta_0(X^t(x,y;0),Y^t(x,y;0)),
 \end{equation}
or, in terms of $\xi = \frac{1}{\alpha}\zeta - \frac{\beta}{\alpha}$,
 \[\xi(x,y,t) = \frac{\zeta_0(X^t(x,y;0),Y^t(x,y;0))}{\alpha(x,y)} - \frac{\beta(x,y)}{\alpha(x,y)}.\]
Since ${\bf X}^t(\x;s)$ is determined by $\U$, which is determined by $\varphi$, this means that $\xi$ is determined by $\varphi$ as well, so we may write
 \[\xi = F[\varphi],\]
$F$ being a map from $S'$ to $C(\overline{Q}_T)$. It is important to remark that, as is known, $\zeta$ given by \eqref{zeta} is a classical solution of \eqref{transport_equation} only if $\zeta_0\in C^1(\overline{D})$; however, if this is not the case, then $\zeta$ is still a weak solution, in the following sense (cf. \cite[Lemma 2.4]{Kat67}):

\begin{lemma} \label{lemma_vorticity}
For all $\Phi\in C^1(\overline{D})$, we have
 \[\p_t(\zeta,\Phi) = (\zeta,\alpha\U\cdot\nabla\Phi).\]
\end{lemma}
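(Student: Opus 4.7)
The plan is to exploit the explicit representation $\zeta(\x,t) = \zeta_0({\bf X}^t(\x;0))$ together with the measure-preserving property of the characteristic flow established in the preceding lemma, then differentiate under the integral sign. Concretely, writing the pairing in integral form and performing the change of variables $\tilde{\x} = {\bf X}^t(\x;0)$ (so that $\x = {\bf X}^0(\tilde{\x};t)$, and the Jacobian determinant equals $1$), one obtains
\[(\zeta(t),\Phi) = \iint_D \zeta_0({\bf X}^t(\x;0))\,\Phi(\x)\dx = \iint_D \zeta_0(\tilde{\x})\,\Phi({\bf X}^0(\tilde{\x};t))\,{\rm d}\tilde{\x}.\]
The advantage of this second form is that all time dependence is now carried by $\Phi\circ{\bf X}^0(\cdot;t)$, whose $t$-derivative is computed directly from the characteristic ODE \eqref{characteristics}.

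Next I would differentiate under the integral sign, using $\frac{\rm d}{{\rm d}t}{\bf X}^0(\tilde{\x};t) = \alpha({\bf X}^0(\tilde{\x};t))\U({\bf X}^0(\tilde{\x};t),t)$, which gives
\[\p_t(\zeta(t),\Phi) = \iint_D \zeta_0(\tilde{\x})\,\big[\alpha\U\cdot\nabla\Phi\big]\big({\bf X}^0(\tilde{\x};t),t\big)\,{\rm d}\tilde{\x}.\]
Since $\alpha\U\cdot\nabla\Phi \in C(\overline{Q}_T)$ (by Lemma \ref{lemma_flow_construction} and the assumption $\Phi\in C^1(\overline{D})$), and $\zeta_0\in C(\overline{D})$ is bounded, the differentiation under the integral sign is legitimate. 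Changing variables back via $\x = {\bf X}^0(\tilde{\x};t)$ (again with unit Jacobian) and recognising $\zeta_0({\bf X}^t(\x;0)) = \zeta(\x,t)$ yields exactly $(\zeta,\alpha\U\cdot\nabla\Phi)$.

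The only mildly delicate point is justifying the differentiation under the integral, since $\zeta_0$ is only H\"older continuous (not $C^1$). This is however handled by dominated convergence applied to the difference quotient $t^{-1}[\Phi({\bf X}^0(\tilde{\x};t+h)) - \Phi({\bf X}^0(\tilde{\x};t))]$, whose pointwise limit exists and is uniformly bounded in $\tilde{\x}$ by $\|\nabla\Phi\|_\infty \cdot \sup_{[0,T]}\|\alpha\U\|_\infty$. Alternatively, one may first regularise $\zeta_0$ by mollification, apply the argument to a $C^1$ approximation where $\zeta$ is a classical solution of the transport equation and the identity reduces to integration by parts (using that $\alpha\U$ is tangential on $\p D$, which kills the boundary term), and then pass to the limit using uniform boundedness of $\zeta$ under the flow.
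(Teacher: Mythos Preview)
Your proof is correct and is essentially the same as the one in Kato's paper \cite{Kat67}, Lemma 2.4, to which the paper explicitly defers without giving its own argument. The change of variables along the measure-preserving characteristic flow, followed by differentiation under the integral sign using \eqref{characteristics}, is exactly Kato's method; your remarks on justifying the differentiation (dominated convergence on the difference quotient, or mollification of $\zeta_0$) are the standard ways to make it rigorous when $\zeta_0$ is merely H\"older.
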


Let us now list some more estimates that will be needed later on. The proof is essentially identical to the proofs of Lemmas 2.5, 2.6, and 2.7 in \cite{Kat67} (up to occasional trivial adjustments to account for $\alpha$ and $\beta$) and is therefore omitted.

\begin{lemma} \label{lemma_estimates}
Given $\varphi\in S'$, $\xi = F[\varphi]$ satisfies the inequalities
  \[ |\xi(\x,t)| \leq A\|\zeta_0\|_\infty + B,\]
where
 \[A = \sup_{\x\in D}\frac{1}{\alpha(\x)} = \frac{4}{(1+r_1^2)^2} \quad \text{and} \quad B = \sup_{\x\in D}\frac{\beta(\x)}{\alpha(\x)} = 8\omega\frac{1-r_1^2}{(1+r_1^2)^3},\]
and
 \[|\xi(\x_1,t) - \xi(\x_2,t)| \leq L_4(\|\varphi\|_\infty)(\|\x_1 - \x_2\|_2^\delta + |t - s|^\delta),\]
where $\delta^{-1} = L_5(\|\varphi\|_\infty)$, whenever $\|\x_1 - \x_2\|_2 \leq 1$ and $|t - s| \leq 1$.
\end{lemma}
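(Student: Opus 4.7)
The plan is to read both estimates from the explicit representation
\[\xi(\x,t) = \frac{\zeta_0({\bf X}^t(\x;0))}{\alpha(\x)} - \frac{\beta(\x)}{\alpha(\x)}\]
and from quantitative regularity of the characteristic flow map. For the pointwise bound, I would use that ${\bf X}^t(\cdot;0)$ maps $\overline{D}$ onto itself, so $|\zeta_0({\bf X}^t(\x;0))|\leq\|\zeta_0\|_\infty$ pointwise, and then apply the triangle inequality to obtain $|\xi(\x,t)|\leq \|\zeta_0\|_\infty/\alpha(\x) + \beta(\x)/\alpha(\x)$ (noting $\beta\geq 0$ on $D$, since $r_1,r_2\in(0,1)$ for a Southern-hemisphere annulus). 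The explicit values of $A$ and $B$ then come from elementary analysis: $1/\alpha = 4/(1+|\x|^2)^2$ is radially decreasing, and the derivative of $s\mapsto(1-s)/(1+s)^3$ equals $(2s-4)/(1+s)^4<0$ on $(0,1)\supset(r_1^2,r_2^2)$, so both suprema are attained as $|\x|\searrow r_1$.

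The Hölder estimate is the classical two-dimensional Euler mechanism. I would begin from the decomposition $\U = \overline{\U} + \lambda(t)\U^*$ of Lemma \ref{lemma_flow_construction}: the second summand is smooth with $|\lambda(t)|\leq L_1(\|\varphi\|_\infty)$, while the Biot--Savart term $\overline{\U} = \nabla^\perp G\varphi$ obeys the log-Lipschitz estimate
\[|\overline{\U}(\x_1)-\overline{\U}(\x_2)|\leq C\|\varphi\|_\infty\|\x_1-\x_2\|_2\bigl(1+\log^+(1/\|\x_1-\x_2\|_2)\bigr),\]
derived from standard singular-kernel analysis of the derivatives of the Dirichlet Green function $g$; multiplication by $\alpha\in C^\infty(\overline{D})$ preserves this. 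Apply an Osgood-type comparison to $\phi(s)=\|{\bf X}^t(\x_1;s)-{\bf X}^t(\x_2;s)\|_2$: the characteristic ODE \eqref{characteristics} gives $\phi'(s)\leq M\phi(s)(1+\log^+(1/\phi(s)))$ with $M=M(\|\varphi\|_\infty)$, and the comparison ODE integrates to
\[\|{\bf X}^t(\x_1;0)-{\bf X}^t(\x_2;0)\|_2\leq C\|\x_1-\x_2\|_2^{\delta},\qquad \delta=e^{-MT},\]
for $\|\x_1-\x_2\|_2\leq 1$. Temporal Hölder continuity follows from the inverse-flow identity ${\bf X}^0({\bf X}^t(\x;0);t)=\x$: uniform boundedness of $\alpha\U$ yields $\|{\bf X}^0(\x_0;t)-{\bf X}^0(\x_0;s)\|_2\leq C|t-s|$, and inverting through the spatial Hölder bound above gives $\|{\bf X}^t(\x;0)-{\bf X}^s(\x;0)\|_2\leq C|t-s|^\delta$ with the same exponent.

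Assembling everything: since $\zeta_0=\alpha\nabla\times\U_0+\beta\in C^\vartheta(\overline{D})$ (from $\U_0\in C^{1+\vartheta}$) and $1/\alpha$, $\beta/\alpha$ are $C^\infty$, adding and subtracting $\zeta_0({\bf X}^s(\x_2;0))/\alpha(\x_1)$ in $\xi(\x_1,t)-\xi(\x_2,s)$ (the $s$ appearing on the right-hand side of the claim should match on the left as well) and estimating term by term with the two Hölder bounds for the flow yields the desired inequality; the final exponent $\min\{\vartheta,e^{-MT}\}$ is absorbed into the single symbol $\delta$, and its reciprocal gives the stated $L_5(\|\varphi\|_\infty)$, while all multiplicative constants are collected into $L_4(\|\varphi\|_\infty)$. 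The main technical obstacle is the quantitative Osgood step, where one must convert log-Lipschitz regularity of $\overline{\U}$ into Hölder regularity of the flow with both exponent and constant depending only on $\|\varphi\|_\infty$ and $T$; this is precisely the content of \cite[Lemmas 2.5--2.7]{Kat67} and carries over verbatim, with only trivial adjustments to account for the smooth factors $\alpha$ and $\beta$.
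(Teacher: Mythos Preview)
Your proposal is correct and takes exactly the approach the paper intends: the paper itself omits the proof, stating only that it is ``essentially identical to the proofs of Lemmas 2.5, 2.6, and 2.7 in \cite{Kat67} (up to occasional trivial adjustments to account for $\alpha$ and $\beta$),'' and what you have written is precisely a sketch of those lemmas---the pointwise bound from the explicit representation of $\xi$, then log-Lipschitz regularity of $\nabla^\perp G\varphi$ feeding an Osgood comparison to produce H\"older continuity of the flow with exponent $e^{-MT}$. Your computation of $A$ and $B$ and your observation that the statement should read $|\xi(\x_1,t)-\xi(\x_2,s)|$ on the left are both apt.
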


We are now in a position to define a new class $S$ that will allow us to apply Schauder's fixed-point theorem.

\begin{definition}
For $M = A\|\zeta_0\|_\infty + B$, we define the class $S$ as the set of all functions $\varphi : D\times [0,T]\to\R$ such that $\|\varphi\|_\infty \leq M$ and
 \[|\varphi(\x_1,t) - \varphi(\x_2,t)| \leq L(\|\x_1 - \x_2\|_2^\delta + |t - s|^\delta) \quad \text{for $\|\x_1 - \x_2\|_2 \leq 1$, $|t - s| \leq 1$},\]
where $L = L_4(\|\varphi\|_\infty)$ and $\delta^{-1} = L_5(\|\varphi\|_\infty)$ (here it is crucial that $L$ and $\delta$ depend on $\|\varphi\|_\infty$ but not on the H\"older exponents of $\varphi$).
\end{definition}

Clearly, $S\subseteq S'$, and, by Lemma \ref{lemma_estimates}, $F$ maps $S$ into itself. It is also clear that $S$ is a compact convex subset of the Banach space $C(\overline{Q}_T)$. Moreover, we have (entirely analogously to \cite[Lemma 2.8]{Kat67}):

\begin{lemma}
The map $F$ is continuous on $S$ in the topology of $C(\overline{Q}_T)$.
\end{lemma}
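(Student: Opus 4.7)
The plan is to show that if $\varphi_n \to \varphi$ in $C(\overline{Q}_T)$ with $\varphi_n,\varphi \in S$, then the associated $\xi_n = F[\varphi_n]$ converge uniformly to $\xi = F[\varphi]$. The strategy has three stages: pass from $\varphi_n \to \varphi$ to $\U_n \to \U$ uniformly, then to uniform convergence of the characteristics ${\bf X}^t_n(\x;s) \to {\bf X}^t(\x;s)$, and finally to $\xi_n \to \xi$ uniformly by composing with $\zeta_0$.

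For the first stage, decompose $\U_n = \overline{\U}_n + \lambda_n(t)\U^*$ as in \eqref{U}. By linearity of $\nabla^\perp G$ and the $L^\infty$-bound in Lemma \ref{lemma_regularity}(ii), $\|\overline{\U}_n - \overline{\U}\|_\infty \leq C\|\varphi_n - \varphi\|_\infty \to 0$; moreover, the uniform Hölder bound built into $S$ combined with Lemma \ref{lemma_regularity}(i) delivers a uniform bound on $\overline{\U}_n$ in $C^{1+\delta'}(\overline{D})$ (hence a uniform spatial Lipschitz bound). To control $\lambda_n$, recall from the proof of Lemma \ref{lemma_flow_construction} that $\lambda_n$ solves a quadratic scalar ODE
 \[\tfrac12\tfrac{\rm d}{{\rm d}t}[\lambda_n^2] + \gamma_1^{(n)}(t)\lambda_n + \gamma_2^{(n)}(t)\lambda_n^2 = 0, \qquad \lambda_n(0) = (\U_0,\U^*),\]
whose coefficients $\gamma_1^{(n)},\gamma_2^{(n)}$ are fixed continuous multilinear expressions in $\overline{\U}_n$ against the smooth quantities $\alpha$, $\beta$, $\U^*$. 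Uniform convergence of $\overline{\U}_n$ therefore forces $\gamma_i^{(n)} \to \gamma_i$ uniformly on $[0,T]$, while the a priori bound $|\lambda_n(t)| \leq L_1(\|\varphi_n\|_\infty) \leq L_1(M)$ from Lemma \ref{lemma_flow_construction} keeps the orbits in a fixed compact set; a standard Grönwall argument applied to $\lambda_n - \lambda$ then yields $\lambda_n \to \lambda$ uniformly on $[0,T]$. Hence $\U_n \to \U$ uniformly on $\overline{Q}_T$, and the family $\{\alpha\U_n\}$ is uniformly Lipschitz in $\x$.

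For the second stage, apply standard continuous-dependence results to the characteristic system \eqref{characteristics}. Writing
 \[{\bf X}^t_n(\x;s) - {\bf X}^t(\x;s) = \int_s^t\bigl[(\alpha\U_n)({\bf X}^t_n(\x;\sigma),\sigma) - (\alpha\U)({\bf X}^t(\x;\sigma),\sigma)\bigr]{\rm d}\sigma,\]
splitting the integrand into a piece controlled by the uniform spatial Lipschitz constant of $\alpha\U$ and a piece bounded by $\|\alpha\U_n - \alpha\U\|_\infty$, and invoking Grönwall's inequality, one obtains
 \[\sup_{s,t \in [0,T]}\sup_{\x\in\overline{D}}\|{\bf X}^t_n(\x;s) - {\bf X}^t(\x;s)\|_2 \leq C\,\|\U_n - \U\|_{L^\infty(Q_T)} \longrightarrow 0.\]

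In the third stage, recall that by hypothesis $\U_0\in C^{1+\vartheta}(\overline{D})$, so $\zeta_0 = \alpha\nabla\times\U_0 + \beta \in C^{0+\vartheta}(\overline{D})$ is uniformly continuous on $\overline{D}$. Since $\alpha,\beta \in C^\infty(\overline{D})$, writing
 \[\xi_n(\x,t) - \xi(\x,t) = \frac{1}{\alpha(\x)}\bigl[\zeta_0({\bf X}^t_n(\x;0)) - \zeta_0({\bf X}^t(\x;0))\bigr],\]
uniform continuity of $\zeta_0$ together with the convergence of the characteristics from the previous step gives $\|\xi_n - \xi\|_{L^\infty(Q_T)} \to 0$, which is the claim. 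The main subtlety is the nonlinear dependence $\varphi \mapsto \lambda$, but because the ODE for $\lambda$ is scalar with coefficients depending continuously (and only through $\overline{\U}$, not its derivatives) on $\varphi$, and because the a priori bound from Lemma \ref{lemma_flow_construction} keeps all solutions in a common bounded set on $[0,T]$, classical ODE continuous-dependence suffices; no new ideas beyond those already used are required.
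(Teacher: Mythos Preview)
Your argument is correct and is precisely the classical approach of \cite{Kat67} that the paper defers to (the paper omits the proof entirely, citing \cite[Lemma~2.8]{Kat67}). One small slip: for the continuous-dependence step on $\lambda_n$ you display the equation for $\tfrac12\tfrac{\rm d}{{\rm d}t}[\lambda_n^2]$ (the $\gamma_i$-form), but the Gr\"onwall argument on $\lambda_n-\lambda$ should be run on the first-order ODE $\lambda_n' + \mu_1^{(n)}(t)\lambda_n + \mu_2\lambda_n^2 + \mu_3^{(n)}(t) = 0$ from the proof of Lemma~\ref{lemma_flow_construction}; the coefficients $\mu_i^{(n)}$ depend on $\overline{\U}_n$ only (not its derivatives) and hence converge uniformly, after which the a~priori bound $|\lambda_n|\le L_1(M)$ gives a uniform Lipschitz constant for the right-hand side and the standard estimate goes through.
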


Schauder's fixed-point theorem now yields the existence of a fixed point $\varphi^*\in S$ of $F$:
 \[\xi = F[\varphi^*] = \varphi^*.\]

\subsection{Proof of Theorem \ref{global_well-posedness}}

With all this preparation, we are ready to prove Theorem \ref{global_well-posedness}. The regularity of the derivatives of $\U$ is established exactly as in \cite[Lemma 3.2 \& Lemma 3.3]{Kat67}. Let us write
 \[{\bf W} = \U_t + (\alpha\U\cdot\nabla)\U + \frac{|\U|^2}{2}\nabla\alpha + \beta J\U\]
as a short-hand notation. Let $\Phi\in C^1(\overline{D})$ with $\Phi|_{\p D} = 0$. Then, using Lemmas \ref{lemma_identity} and \ref{lemma_vorticity}, \eqref{vorticity_equation}, and the fact that $\xi = \varphi^*$, we have
 \begin{align*}
 \p_t(\U,\nabla^\perp\Phi) &= \p_t(\nabla\times\U,\Phi) = \p_t(\varphi^*,\Phi) = (\alpha\varphi^*+\beta,\U\cdot\nabla\Phi) \\
 &\quad = -((\alpha\U\cdot\nabla)\U,\nabla^\perp\Phi) - \left(\frac{|\U|^2}{2}\nabla\alpha,\nabla^\perp\Phi\right) - (\beta J\U,\nabla^\perp\Phi),
 \end{align*}
that is, $({\bf W},\nabla^\perp\Phi) = 0$ for each such $\Phi$. On the other hand, $({\bf W},\U^*) = 0$ by \eqref{flow_2}. Lemma \ref{lemma_pressure} now yields the existence of $P\in C^{1,0}(\overline{Q}_T)$ such that $W = -\nabla P$; this is \eqref{Euler_compact_1}. The equations \eqref{Euler_compact_2} and \eqref{Euler_compact_3} are automatically satisfied by construction of $\U$ (cf. Lemma \ref{lemma_flow_construction}). It remains to show \eqref{initial_condition}. Set ${\bf W}_0 \coloneqq \U|_{t=0} - \U_0$; then
 \[\nabla\times{\bf W}_0 = \nabla\times\U|_{t=0} - \nabla\times\U_0 = \varphi^*|_{t=0} - \nabla\times\U_0 = 0,\]
because $\varphi^* = \xi$ and $\xi|_{t=0} = \nabla\times\U_0$. Thus
 \[({\bf W}_0,\nabla^\perp\Phi) = (\nabla\times\U_0,\Phi) = 0\]
for all $\Phi\in C^1(\overline{D})$ with $\Phi|_{t=0}=0$. Moreover, $({\bf W},\U^*) = 0$ by \eqref{flow_3}. Therefore, by Lemma \ref{lemma_pressure} (and since ${\bf W}_0\in C^1(\overline{D})$) there is $q\in C^2(\overline{D})$ such that ${\bf W}_0 = \nabla q$. But ${\bf W}_0$ is a tangential flow, $q$ is harmonic in $D$ and $\nabla q\cdot{\bf n} = 0$, thus $q$ is constant and ${\bf W}_0 = 0$, as desired.

As for uniqueness, suppose that $(\U,P)$ and $(\tilde{\U},\tilde{P})$ solve \eqref{Euler_compact}--\eqref{initial_condition}. Set $\hat{\U} \coloneqq \tilde{\U} - \U$ and $Q \coloneqq \tilde{P} - P$; then $\hat{\U}\cdot{\bf n} = 0$ on $\p D$ and $\hat{\U}|_{t=0} = 0$. Subtracting the equations for $(\U,P)$ and $(\tilde{\U},\tilde{P})$, we obtain
 \[\hat{\U}_t + (\alpha\tilde{\U}\cdot\nabla)\hat{\U} + (\alpha\hat{\U}\cdot\nabla)\U + \frac{1}{2}(|\tilde{\U}|^2 - |\U|^2)\nabla\alpha + \beta J\hat{\U} = -\nabla Q.\]
Multiplying by $\hat{U}$, integrating over $D$, and noticing that
 \[((\alpha\tilde{\U}\cdot\nabla)\hat{\U},\hat{\U}) = -\frac{1}{2}(\tilde{\U}\cdot\nabla\alpha,|\hat{\U}|^2)\]
(as in the proof of Lemma \ref{lemma_flow_construction}) and $(\nabla Q,\hat{\U}) = 0$ (because $\hat{\U}$ is a tangential flow), we see that
 \[\p_t(\hat{\U},\hat{\U}) = \frac{1}{2}(\tilde{\U}\cdot\nabla\alpha,|\hat{\U}|^2) -((\alpha\hat{\U}\cdot\nabla)\U,\hat{\U}) - \frac{1}{2}(((\tilde{\U} + \U)\cdot\hat{\U})\nabla\alpha,\hat{\U}).\]
Using the fact that $\alpha,\U,\tilde{\U}$ are continuously differentiable, we can find a real (not necessarily positive) constant $C\in\R$ such that
 \[\p_t(\hat{\U},\hat{\U}) \leq C (\hat{\U},\hat{\U}).\]
By Gronwall's inequality and the initial condition $\hat{\U}|_{t=0} = 0$, it follows that $\hat{\U} = 0$ and thus $\nabla Q = 0$.

\section{An alternative approach for zonal solutions} \label{Appendix}

It turns out that, via a fixed-point argument, we can show existence of solutions to \eqref{radial_problem} for all $\lambda\in\R$, that is, it can be guaranteed that $\lambda$ is not an eigenvalue of \eqref{Sturm-Liouville_homogeneous}, provided that the region $\C$ be sufficiently ``narrow'' (depending on $|\lambda|$). The result is formulated more conveniently after applying a stereographic projection centred at the North Pole; in other words, as in Section \ref{classical_solutions}, we perform the change of variable
 \[r = \frac{\cos\theta}{1-\sin\theta},\]
after which, setting $\tilde{\Psi}_\lambda(r) = \Psi_{\lambda,\Upsilon}(\theta)$, problem \eqref{radial_problem} becomes
 \begin{subequations}\label{radial_problem_b}
 \begin{align}
 &\frac{1}{r}\big(r\tilde{\Psi}_\lambda'(r)\big)' = -\frac{4\lambda}{(1+r^2)^2}\tilde{\Psi}_\lambda(r) + 8\omega\frac{1-r^2}{(1+r^2)^3}, \quad r\in(r_1,r_2), \label{radial_equation_b} \\
 &\tilde{\Psi}_\lambda(r_1) = \psi_1, \ \tilde{\Psi}_\lambda(r_2) = \psi_2, \label{radial_BC_b}
 \end{align}
 \end{subequations}
where $r_1 = \cos\theta_1/(1-\sin\theta_1)$ and $r_2 = \cos\theta_2/(1-\sin\theta_2)$ (so that $0<r_1<r_2<1$).

\begin{proposition}
Let $\lambda\in\R$ and suppose that
 \begin{equation} \label{assumption_contraction}
 \frac{r_2}{r_1} \leq \e^{(2|\lambda|)^{-1/2}}.
 \end{equation}
Then there exists a unique solution $\tilde{\Psi}_\lambda\in C^2([r_1,r_2])$ of problem \eqref{radial_problem_b}.
\end{proposition}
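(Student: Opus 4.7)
The plan is to recast \eqref{radial_problem_b} as a fixed-point equation and then invoke the Banach contraction principle. Define the operator $T : C([r_1,r_2]) \to C([r_1,r_2])$ by letting $T\phi$ be the unique solution of the linear boundary value problem
\begin{align*}
& \frac{1}{r}\bigl(r(T\phi)'(r)\bigr)' = -\frac{4\lambda}{(1+r^2)^2}\phi(r) + 8\omega\frac{1-r^2}{(1+r^2)^3}, \quad r \in (r_1,r_2), \\
& (T\phi)(r_1) = \psi_1, \quad (T\phi)(r_2) = \psi_2,
\end{align*}
obtained by ``freezing'' the $\tilde{\Psi}_\lambda$-dependence on the right-hand side of \eqref{radial_equation_b}. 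Since $\{1,\log r\}$ is a fundamental system for the homogeneous equation $(rv')' = 0$ and $\log(r_2/r_1) > 0$, this auxiliary problem is uniquely solvable by two successive integrations, with the two integration constants determined by the Dirichlet data; in particular, $T\phi \in C^2([r_1,r_2])$ whenever $\phi \in C([r_1,r_2])$. A fixed point of $T$ is then precisely a classical solution of \eqref{radial_problem_b}.

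To exhibit $T$ as a contraction on $C([r_1,r_2])$ equipped with the sup norm, I would perform the change of variable $\tau = \log r$, under which $\frac{1}{r}(rv')'$ transforms into $e^{-2\tau}v_{\tau\tau}$. For $\phi_1,\phi_2 \in C([r_1,r_2])$, the difference $W(\tau) := (T\phi_1 - T\phi_2)(e^\tau)$ satisfies the homogeneous-Dirichlet problem
\[ W''(\tau) = \tilde{a}(\tau)\bigl(\phi_1(e^\tau) - \phi_2(e^\tau)\bigr), \quad \tau \in (\tau_1,\tau_2), \qquad W(\tau_1) = W(\tau_2) = 0, \]
on the interval $[\tau_1,\tau_2] = [\log r_1, \log r_2]$ of length $L := \log(r_2/r_1)$, where $\tilde{a}(\tau) = -4\lambda\, e^{2\tau}/(1+e^{2\tau})^2$ obeys $|\tilde{a}(\tau)| \leq |\lambda|$ uniformly (the maximum of $r^2/(1+r^2)^2$ on $[0,\infty)$ being $1/4$). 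Using the standard Dirichlet Green function $H(\tau,s)$ of $\p_\tau^2$ on $[\tau_1,\tau_2]$, a direct computation yields $\int_{\tau_1}^{\tau_2}|H(\tau,s)|\ds = (\tau-\tau_1)(\tau_2-\tau)/2 \leq L^2/8$, and therefore
\[ \|T\phi_1 - T\phi_2\|_\infty \leq \frac{|\lambda|L^2}{8}\|\phi_1 - \phi_2\|_\infty. \]
Under assumption \eqref{assumption_contraction} one has $|\lambda|L^2 \leq 1/2$, so the contraction constant is at most $1/16 < 1$.

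The Banach fixed-point theorem then provides a unique $\tilde{\Psi}_\lambda \in C([r_1,r_2])$ with $T\tilde{\Psi}_\lambda = \tilde{\Psi}_\lambda$, which by the regularity property of $T$ noted above automatically lies in $C^2([r_1,r_2])$ and solves \eqref{radial_problem_b}. I do not expect any genuine obstacle in this strategy; the only concrete computations to carry out carefully are the explicit Green-function estimate and the elementary bound $|\tilde{a}| \leq |\lambda|$. The hypothesis $r_2/r_1 \leq e^{(2|\lambda|)^{-1/2}}$ in fact leaves generous slack—the argument would work under any assumption ensuring $|\lambda|L^2 < 8$—but the clean radical form of the bound is presumably what the authors wish to record.
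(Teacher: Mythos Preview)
Your proposal is correct and follows essentially the same route as the paper: both pass to the logarithmic variable (the paper sets $t=-\log r$, you set $\tau=\log r$) and apply the Banach fixed-point theorem to the resulting second-order ODE with Dirichlet data. The only difference is in the bookkeeping of the contraction estimate---the paper integrates twice and bounds the boundary-adjustment term separately to obtain Lipschitz constant $2|\lambda|L^2$, whereas your Dirichlet Green-function computation gives the sharper constant $|\lambda|L^2/8$; as you observe, this leaves considerable slack under the stated hypothesis.
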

\begin{proof}
Following \cite{Chu18}, it is convenient to perform the change of variable
 \[r = \e^{-t}, \quad u(t) = \tilde{\Psi}_\lambda(r) = \tilde{\Psi}_\lambda(\e^{-t}).\]
Setting
 \[t_1 = \log\frac{1}{r_2}, \quad t_2 = \log\frac{1}{r_1}\]
(so that $t_1<t_2$), equation \eqref{radial_equation_b} becomes
 \begin{equation} \label{radial_transformed}
 u''(t) = -\frac{\lambda u(t)}{\cosh^2(t)} + 2\omega\frac{\sinh(t)}{\cosh^3(t)}, \quad t\in (t_1,t_2),
 \end{equation}
whereas the boundary conditions \eqref{radial_BC_b} become simply
 \begin{equation} \label{radial_BC_transformed}
 u(t_1) = \psi_2, \quad u(t_2) = \psi_1.
 \end{equation}
Integrating \eqref{radial_transformed} twice on $[t_1,t]$ (for $t\in[t_1,t_2]$) yields the integral equation
 \begin{equation} \label{integral_equation}
 u(t) = \psi_2 - \psi_1 + \mu(u)(t-t_1) - \lambda\int_{t_1}^t\frac{t-s}{\cosh^2(s)}u(s)\ds + 2\omega\int_{t_1}^t (t-s)\frac{\sinh(s)}{\cosh^3(s)}\ds,
 \end{equation}
$\mu(u)\in\R$ being an integration constant for which we must require
 \[\mu(u) = \frac{1}{t_2-t_1}\bigg\{\psi_1-\psi_2 + \lambda\int_{t_1}^{t_2}\frac{t_2-s}{\cosh^2(s)}u(s)\ds - 2\omega\int_{t_1}^{t_2} (t_2-s)\frac{\sinh(s)}{\cosh^3(s)}\ds\bigg\}\]
for the boundary conditions \eqref{radial_BC_transformed} to be satisfied. Let us now define
 \[X = \{f\in C([t_1,t_2]) : f(t_1) = \psi_2, \, f(t_2) = \psi_1\},\]
which is a closed subset of the space $C([t_1,t_2])$ of continuous functions on $[t_1,t_2]$. We define the map $\T:X\to X$ as
 \[\T(u)(t) = \psi_2 - \psi_1 + \mu(u)(t-t_1) - \lambda\int_{t_1}^t\frac{t-s}{\cosh^2(s)}u(s)\ds + 2\omega\int_{t_1}^t (t-s)\frac{\sinh(s)}{\cosh^3(s)}\ds;\]
it is readily checked that indeed $\T(X)\subseteq X$, so that $\T$ is well-defined. We now show that $\T$ is a contraction on $X$. Let $u,v\in X$; then
 \begin{align*}
 |\T(u)(t) - \T(v)(t)| &\leq 2|\lambda|(t_2-t_1)\int_{t_1}^{t_2}|u(s)-v(s)|\ds \\
 & \leq 2|\lambda|(t_2 - t_1)^2\|u-v\|_{L^\infty([t_1,t_2])} \quad \text{for all } t\in[t_1,t_2],
 \end{align*}
which, in view of \eqref{assumption_contraction}, yields the asserted contraction property of $\T$ on $X$. The Banach fixed-point theorem now yields the existence of a unique fixed point $u\in X$ of $\T$; the fixed-point $u$ is thus a solution of the integral equation \eqref{integral_equation} satisfying the boundary conditions \eqref{radial_BC_transformed}. Finally, note that, $u$ being continuous, the right-hand side of \eqref{integral_equation} is twice continuously differentiable; this implies that, in fact, $u\in C^2([t_1,t_2])$. Transforming back to the original variable $r = \e^{-t}$ then completes the proof.
\end{proof}

\paragraph{\textbf{Data availability statement}} This research contains no data sets.

\vspace{0.2cm}

\paragraph{\textbf{Conflict of interest}} The authors have no competing financial or non-financial interests to report.

\bibliographystyle{acm}
\bibliography{references}

\end{document}